\newcommand  \ind[1]  {   {1\hspace{-1.2mm}{\rm I}}_{\{#1\} }    }
\newcommand{\commentout}[1]{}
\newcommand {\al} {\alpha}
\newcommand {\eps}  {\varepsilon}
\newcommand {\lb} {\lambda}
\newcommand {\Chi} {{\bf \raise 2pt \hbox{$\chi$}} }
\newcommand {\dv}  { {\rm div} }
\newcommand{\dis}{\displaystyle}
\newcommand{\red}{\textcolor{black}}
\newcommand{\blue}{\textcolor{black}}
\newcommand{\beq}{\begin{equation}}
\newcommand{\eeq}{\end{equation}}
\newcommand{\bea} {\begin{array}{rl}}
\newcommand{\eea} {\end{array}}
\newcommand{\bepa}{\left\{ \begin{array}{l}}
\newcommand{\eepa} {\end{array}\right.}
\newtheorem{theorem}{Theorem}
\newtheorem{lemma}[theorem]{Lemma}
\newtheorem{remark}[theorem]{Remark}
\newtheorem{proposition}[theorem]{Proposition}
\newtheorem{corollary}[theorem]{Corollary}
\newcommand{\qed}{{ \hfill
                       {\unskip\kern 6pt\penalty 500 \raise -2pt\hbox{\vrule\vbox to 6pt{\hrule width 6pt
                       \vfill\hrule}\vrule} \par}   }}
\newenvironment{proof}{
  \par\noindent\textit{Proof.}\
}{ \par
\hfill$\square$\par \medskip
}
\title{Multiscale analysis of a kinetic equation for mechanotaxis}
\author{
Beno\^ \i t Perthame\thanks{Sorbonne Universit{\'e}, CNRS, Universit\'{e} de Paris Cit\'e, Inria, Laboratoire Jacques-Louis Lions,  F-75005 Paris. 
Email: Benoit.Perthame@sorbonne-universite.fr}
\and
Francesco Salvarani\thanks{De Vinci Higher Education, De Vinci Research Center, Paris, France \& Dipartimento di Matematica ``F.
Casorati'', Universit\`a degli Studi di Pavia, Italy.
Email: francesco.salvarani@unipv.it}
\and
Shugo Yasuda\thanks{Graduate School of Information Science, University of Hyogo, Kobe 650-0047, Japan. Email: yasuda@gsis.u-hyogo.ac.jp}
}
\date{\today}
\begin{document}
\maketitle
\pagestyle{plain}
\pagenumbering{arabic}

\begin{abstract} 
We present a new kinetic equation for cell migration driven by mechanical interactions with the substrate, an effect not previously captured in kinetic models,
and essential for explaining observed collective behaviors such as those in bacterial colonies. The model introduces an acceleration term that accounts for the dynamics of motile cells undergoing mechanotaxis, where extracellular signals modulate the forces arising from cell–substrate interactions. From this formulation, we derive a family of macroscopic limit equations and analyze their principal properties. In particular, we examine linear stability and pattern formation ability through theoretical analysis, supported by numerical simulations.
\end{abstract} 
\vskip .7cm

\noindent{\makebox[1in]\hrulefill}\newline
2020 \textit{Mathematics Subject Classification.} 35B36; 35Q84; 92C17
\newline\textit{Keywords and phrases.} Kinetic equations; Runs and Tumbles; Active matter; Strong friction; Mechanotaxis

%
\section{Introduction}
\label{sec:intro}

Cell migration guided by mechanical signals is widely 
reported and can play a central role in directing cell motion across a range of biological systems~\cite{LO2000144}.
This phenomenon, commonly referred to as durotaxis, and more generally as mechanotaxis, contributes to key physiological and pathological processes such as wound healing, tissue regeneration, and collective cancer cell invasion~\cite{Malik2020,doi:10.1126/science.aaf7119}.
Mechanical interactions between cells and their substrates have also been shown to play important roles in collective bacterial behaviors, including swarming, pattern formation, and biofilm formation, where bacteria actively sense and modify their mechanical environment to regulate collective dynamics~\cite{Partridge2013, Persat2015, Yang2017}. 
For instance, cyanobacteria can secrete extracellular substances that alter substrate mechanical properties, leading to effective collective guidance of the population~\cite{Ursell2013}.

Understanding the mechanisms that govern these phenomena is essential not only for theoretical biology but also for practical applications in biomedical and environmental engineering, including the design of biomaterials and the development of regenerative therapies and bioremediation strategies. 

However, the inherent complexity of these processes, which involves multiscale interactions between physical forces, chemical signals, and cellular dynamics, makes it challenging to construct predictive models that are both accurate and computationally feasible.

This work addresses the need to bridge the gap between kinetic models, which describe the behavior of cells at a mesoscopic scale, and macroscopic models, which capture the collective evolution of cell populations. 
In particular, the study focuses on the dominant role of friction and its interplay with stochastic reorientation mechanisms (tumbling), both of which critically influence pattern formation and the stability of spatial configurations. 
The objective is to provide a \red{formal} derivation of asymptotic limits that connect kinetic equations to diffusion-type models, clarifying how different time scales and friction parameters lead to qualitatively distinct behaviors.

The purpose of this research is hence to derive reduced models that retain the descriptive power of the original systems while remaining computationally tractable. Such models are essential for analyzing phenomena such as cell aggregation, the emergence of periodic structures and the conditions that trigger Turing-type instabilities.

\subsection{The kinetic model for mechanotaxis}
\label{sec:kinetic}

More precisely, we study the following kinetic equation:
\begin{equation}
\begin{cases}
\partial_t f + V \Omega \cdot \nabla_x f +\displaystyle \frac 1 m  \partial_V \left[ ({F} - V \mu(S)) f \right] = {\lambda} \mathcal{T}[f], \\[8pt]
f(t,x,\Omega, V=0) = 0. \\[8pt]
\end{cases}
\label{eq:K1}
\end{equation}
Here $x \in \mathbb{R}^d$ ($d\in \mathbb{N}^*$) denotes the spatial position, $V > 0$ the magnitude of the cell velocity, and $\Omega \in \mathbb{S}^{d-1}$ the direction of motion, normalized such that $|\mathbb{S}^{d-1}| = 1$. The distribution function $f\,~:~\,[0,T]\times \mathbb{R}^d\times\mathbb{S}^{d-1}\times \mathbb{R}_+ \to \mathbb{R}_+$ represents the density of cells at time $t$, position $x$, moving in direction $\Omega$ with speed $V$.

This model describes the dynamics of motile cells undergoing mechanotaxis, where the force exerted by the interaction with the substrate is influenced by a signal $S$. 
The transport term $V \Omega \cdot \nabla_x f$ accounts for the spatial movement of cells, while the velocity drift term $\frac 1 m \partial_V[(F - V \mu(S)) f]$, where $m$ represents the typical cell mass,  models acceleration or deceleration due to internal or external stimuli. The term $F - V \mu(S)$ represents the net force acting on the cell in the velocity space, where $F>0$ is a generic propulsion force and $S\mapsto \mu(S) >0$ encodes the influence of the signal on friction. 

The right-hand side contains the \textit{tumbling operator} $\mathcal{T}[f]$, which models random reorientation events. Tumbling changes the direction $\Omega$ but not the speed $V$ and is assumed to occur with rate $\lambda$.
\blue{It takes the general form of an integral operator where the tumbling kernel $K(\Omega,\Omega')$ denotes the probability of a direction jump from $\Omega'$ to $\Omega$ 
\begin{equation}
\mathcal{T}[f] = \int_{\mathbb{S}^{d-1}}K(\Omega,\Omega')f(\Omega')\mathrm{d}\Omega' - f(\Omega), \qquad \int_{\mathbb{S}^{d-1}}K(\Omega,\Omega')\mathrm{d}\Omega=1.
\label{eq:T}
\end{equation}
The simplest form of the tumbling operator assumes that after a reorientation event, the new direction is uniformly distributed.
In the following, for better readability, we consider only the uniform tumbling kernel, i.e., 
$$
K(\Omega,\Omega')=\frac{1}{|\mathbb{S}^{d-1}|}.
$$
}
However, even for more general tumbling kernels having the \red{form $\tilde K(\Omega \cdot\Omega')>0$,  we can obtain the same continuum} limit equations except that the diffusion constant is modified \cite{allaire2018transport, Perthame2004_chemo}. \blue{ This assumption also shows that our approach differs deeply from the usual derivation of the Keller-Segel system, where it is assumed that the tumbling kernel is biased towards higher signals \cite{AltDO88, Chalub2004, Perthame2004_chemo}. Here, the mechanism for obtaining a macroscopic equation only stems from the variations of friction.
}

The boundary condition $f(t,x,\Omega, V=0) = 0$ ensures that cells cannot have zero velocity. The macroscopic cell density is defined by:
$$
\rho(t,x) = \int_{{\mathbb R}_+} \int_{\mathbb{S}^{d-1}} f(t,x,\Omega,V) \, \mathrm{d} \Omega \, dV.
$$

For convenience, we introduce the following integrated quantities:

\begin{equation*}
g(t,x,V) := \int_{\mathbb{S}^{d-1}} f(t,x,\Omega,V) \, \mathrm{d}\Omega, \qquad
p(t,x,\Omega) := \int_{{\mathbb R}_+}  f(t,x,\Omega,V) \, dV.
\end{equation*}

\subsection{Literature review}
Using kinetic equations to describe the run-and-tumble movement goes back to the 80's and 90's, \cite{AltDO88,OthmerStevens97}, and was motivated by the first experimental observations of this movement for bacteria. In the case of eukaryotic cells moving in a tissue, the so-called mesenchymal movement, cells adhere to a fiber network, the extracellular matrix (ECM), and the re-orientation process depends on the fibers' directions. This leads to use more sophisticated tumbling kernels in the kinetic equations, in particular to take into account the degradation and remodeling of the ECM by the cells \cite{HillenM5_05}, and, at the macroscopic level, to derive drift-diffusion equations where the mean drift velocity is determined by the mean orientation of the tissue. Migrating cells can sense their environment, leading to nonlocal kinetic models. Such models were established in~\cite{HPS2007} where a nonlocal macroscopic limit is also derived. 
In \cite{Painter_09}, it is also proposed to make a distinction, in the velocity variable, between orientation and velocity modulus. Another class of models takes into account that cells can use a non-local sensing of the surrounding cell density to decide of the re-orientation directions after tumble, see  \cite{LPrez_20,CLoy_22}. This process leads to high cell concentrations and singularities, see~\cite{LPrez_21,LoyP24} in long times, as it occurs in short times in the seminal Keller-Segel system. Let us also mention that internal states of the cells, deciding of their tumbling rate, has been studied in~\cite{XXTang18,Yasuda2022} with consequences closely related to our present analysis.

In these papers the force exerted by interaction with  the substrate is not taken into account. Here we propose to consider the mechanical force  resulting from the interaction with the substrate. In particular this allows us to take into account the modification of mechanical properties of the substrate by the cells themselves. 

\blue{Our pattern formation results are consistent with the broader chemotaxis literature, where instability of homogeneous steady states and the emergence of structured
stationary patterns have been analyzed in chemotaxis systems. In particular, increasing chemotactic effects can destabilize homogeneous equilibria and generate nonhomogeneous states
such as spikes, stripes, rolls, square or hexagonal patterns, depending on the structure of the model and the parameter regime. See, for instance, \cite{mimuraJJIAM2010,painter1999stripe}. 
}

{\color{black}
\subsection{Scaling estimates and biological interpretation}
The basic kinetic equation (\ref{eq:K1}) is nondimensionalized as
$$
\partial_{\widehat t}\widehat f +\widehat V\Omega\cdot\nabla_{\widehat x}\widehat f
+\frac{1}{\widehat m}\partial_{\widehat V}[(\widehat F-\widehat V\widehat \mu(S))\widehat f]=\frac{1}{\varepsilon}\widehat \lambda{\cal T}[\widehat f],
$$
where the dimensionless parameters $\widehat m$ and $\varepsilon$ are defined by
$$
\widehat m=\frac{m/\mu_\mathrm{c}}{t_\mathrm{c}},\quad
\varepsilon=\frac{{\lambda_\mathrm{c}}^{-1}}{t_\mathrm{c}},
$$
with  $t_\mathrm{c}=L_\mathrm{c}/V_\mathrm{c}$.
Here, the notation ``$ \widehat{\;}$'' denotes nondimensional variables, and the subscript~$\mathrm{c}$ indicates characteristic quantities.
The propulsion force is scaled as $F=F_\mathrm{c}\widehat F$ with $F_\mathrm{c}=V_\mathrm{c}\mu_\mathrm{c}$.
It should be noted that $\widehat{m}$ represents the dimensionless velocity relaxation time, whereas $\varepsilon$ represents the dimensionless run time.

The velocity relaxation time $m/\mu_\mathrm{c}$ depends strongly on how the cell mechanically interacts with its external environment.
At the level of a simple scaling estimate, the densities of the cell and its surrounding biological environment may be regarded as being of the same order.
Therefore, density ratios do not explicitly enter the following estimates, and the dominant dependence is expressed in terms of the cell size $d$ and the effective interaction area $A_\mathrm{eff}$. 
For a cell moving in a fluid, hydrodynamic drag gives $m/\mu_\mathrm{c}\sim d^2/\nu_\mathrm{c}$, where $\nu_\mathrm{c}$ is the kinematic viscosity of the fluid.
For a cell crawling on a thin liquid layer of thickness $h$, as in the gliding motion of cells, a lubrication-type estimate gives $m/\mu_\mathrm{c}\sim d^3/(\nu_\mathrm{c}A_{\rm eff}/h)$.
For substrate-based motion, if the effective friction coefficient scales with the contact area, then $m/\mu_\mathrm{c}\sim d^3/(\zeta A_{\rm eff})$, where $\zeta$ is a density-normalized effective friction parameter with the dimension of velocity.

We now rewrite the dimensionless parameter $\widehat m$ as $\widehat m=\alpha \varepsilon$, where
$$
\alpha=\frac{m\lambda_\mathrm{c}}{\mu_\mathrm{c}}.
$$
The parameter $\alpha$ measures the ratio of the velocity relaxation time $m/\mu_\mathrm{c}$ to the run time $1/\lambda_\mathrm{c}$.

The run time of cells, $\lambda_\mathrm{c}^{-1}$, varies widely depending on the species and their motility state. 
For example, run-and-tumble bacteria such as \textit{Escherichia coli} typically exhibit $\lambda_\mathrm{c}^{-1}\sim~1\,\,[\mathrm{s}]$. 
In contrast, in run-and-tumble-like motions of eukaryotic cells, the run time can be much longer, typically of the order $\lambda_\mathrm{c}^{-1}\sim 10^2\,\,[\mathrm{s}]$, \cite{Uwamichi2023,Zhang2023}.
Thus, for bacteria and eukaryotic cells of microscopic size, say \(d<100\,\mu\mathrm{m}\), the parameter \(\alpha\) is expected to be negligibly small in most biologically relevant situations.
Nevertheless, from the mathematical point of view, it is natural to investigate distinguished scalings of $\alpha$ since they lead to distinct macroscopic limits.

For bacteria moving in fluids, the signal $S$ can be a chemical concentration that modifies the fluid viscosity.
On the other hand, for bacteria crawling on a thin fluid layer, the signal $S$ can represent the thickness of the fluid layer or the presence of a lubricating surfactant, which may be produced by the cells themselves.
}

\subsection{Structure of the paper}
The structure of the article is the following.
After the introduction, the paper analyzes various asymptotic regimes. Several scaling limits are examined: the strong friction limit, which leads to a modified Keller-Segel-type equation; the diffusion limit, which emerges under fast tumbling dynamics, and an intermediate regime that connects these two cases through a scaling parameter. 
For each regime, we derive the corresponding effective equations.

The subsequent sections explore the macroscopic behaviour of the system, analyzing entropy dissipation properties and introducing a condition for the entropy control as well as for the free energy decay.

The study then addresses pattern formation: by linearizing around a steady state, it identifies instability conditions that can give rise to spatially periodic structures, thanks to Turing mechanisms.
We also classify nonlinear patterns theoretically.
Finally, the article presents numerical simulations in one space dimension, which validate the theoretical predictions and illustrate phenomena such as the transition between various patterns depending on friction-diffusion parameters.

The paper concludes with a discussion of the biological and mathematical implications of the results, emphasizing how the proposed multiscale framework can serve as a foundation for future developments in both theoretical analysis and practical applications in tissue modeling and regenerative medicine.

\section{Macroscopic limits}

Two physical quantities permit to rescale the kinetic equation \eqref{eq:K1}, namely the tumbling rate $\lambda$ and the cell mass \red{represented} by $m$. The rigorous derivation of macroscopic equations by diffusion limit is well established \cite{BSS85,Chalub2004} and we adapt it only at a formal level.

We begin with two simple peculiar limits when these parameters vanish separately and turn, in the third subsection, to the general case. 
In this section we consider that $S$ is smooth and possibly a functional of the macroscopic density $\rho$.

\subsection{Friction dominating}

 A possible scaling is just to consider that the cell mass is small, $m \approx 0$,  while the tumbling rate stays of order $1$,  $\lambda =O(1)$. With standard arguments, see \cite{PTV2016}, we obtain
\begin{equation}
f \to p (t,x, \Omega) \delta \left(V= \frac{F}{\mu(S)}\right) \qquad \text{as } m \to 0.
\label{eq:cv}
\end{equation}
Hence $p $ satisfies
\begin{equation} \begin{cases}
\displaystyle
\partial_t p  +\Omega\cdot \nabla_x 
\left[ \frac{F}{\mu(S)} p  \right] = \lambda {\mathcal T}[ p  ] ,
\\[5pt]
\displaystyle
\rho  (t,x) = \dis \int_{{\mathbb S}^{d-1}} p (t,x, \Omega)\, \mathrm{d}\Omega.
\end{cases}
\label{eq:K2}
\end{equation}
This is still a kinetic equation because of the orientation $\Omega$. To obtain a \red{macroscopic} equation, we still need to argue on the tumbling rate.

%
\paragraph{Strong friction, diffusion limit.}

The diffusion limit is obtained by rescaling \eqref{eq:K2} in time  with $\eps= \frac 1 \lb \approx 0$:
\begin{equation} \label{eq:K3}
\begin{cases}
\displaystyle
\eps \partial_t p_\eps +\Omega\cdot \nabla_x 
\left[ \frac{F}{\mu(S)} p_\eps \right] = \frac 1 \eps {\mathcal T}[ p_\eps ] ,
\\[5pt]
\displaystyle
\rho_\eps (t,x) = \dis \int_{{\mathbb S}^{d-1}}   p_\eps(t,x, \Omega) \, \mathrm{d}\Omega.
\end{cases}
\end{equation}
As usual we find that, as $\eps \to 0$, 
\[
p_\eps \to \rho_0(t,x) M(\Omega), \qquad M(\Omega) =\frac{1}{|{\mathbb S}^{d-1}|} \ind{{\mathbb S}^{d-1}}  \qquad \int_{{\mathbb S}^{d-1}} M
\mathrm{d}\Omega= 1,
\]
and, integrating \eqref{eq:K3} we have
\[
 \partial_t \rho_\eps (t,x) + \dv \left( \frac{F}{\mu(S)} j_\eps \right) =0,
\]
with 
\[
j_\eps (t,x) = \frac 1 \eps \int_{{\mathbb S}^{d-1}} \Omega \;  p_\eps (t,x, \Omega) \, \mathrm{d}\Omega .
\]
We compute
\[
\eps   \partial_t j_\eps +  \int_{{\mathbb S}^{d-1}}  \Omega \otimes \Omega \cdot\nabla \left [ \frac{F}{\mu(S)} p_\eps \right ]\,\mathrm{d}\Omega
= -j_\eps(t,x) .
\]
We see immediately that
$$
\int_{{\mathbb S}^{d-1}}  \Omega \otimes \Omega M(\Omega) \mathrm{d} \Omega =
\frac 1{|\mathbb{S}^{d-1}|}\int_{{\mathbb S}^{d-1}}  \Omega \otimes \Omega \,\mathrm{d} \Omega = \frac 1 d I,
$$
with $I$ the identity matrix in $\mathbb{R}^d$.

Hence, in the limit we find
\[
D \nabla  \left[ \frac{F}{\mu(S)} \rho_0 \right ] 
= -  j_0(t,x), \qquad \quad D := \frac 1d.
\]

As a consequence, we obtain the macroscopic equation
\begin{equation}
\partial_t \rho_0 - D \, \dv \left[  \frac{F}{\mu(S)} \nabla  \left( \frac{F}{\mu(S)} \rho_0 \right)\right]=0 .
\label{eq:diff}
\end{equation}
This is \blue{a special case of the general Patlak-Keller-Segel systems, with diffusion and sensitivity depending on $S$, see~\cite{HillenPainterUser, Perthame2004_chemo}.
}

\subsection{Fast tumbling, moderate friction}
\label{sec:is}

The order in which limits are taken can be important. Therefore it is useful to consider taking the limits in the opposite order. Therefore, we first set $\eps= \frac 1 \lb \approx 0$ with the cell mass $m=O(1)$. Then, we rescale Eq. \eqref{eq:K1} as
\begin{equation}\label{eq:K4}
\eps \partial_t f_\eps +V \Omega\cdot\nabla_x f_\eps + \frac 1 m \partial_V [(F- V \mu(S)) f_\eps ]  =   \frac 1 \eps  {\mathcal T}[f_\eps],
\end{equation}
and set 
\[
g_\eps (t,x,V) = \int_{{\mathbb S}^{d-1}} f_\eps (t,x,V, \Omega) \,\mathrm{d}\Omega.
\]
\red{As before,} the tumbling kernel gives
\[
f_\eps \to g_0(t,x, V) M(\Omega), \qquad  \int_{{\mathbb S}^{d-1}} \Omega f_\eps \, \mathrm{d}\Omega \to 0.
\]
By integrating Eq.~\eqref{eq:K4} in $\Omega$, we have
\begin{equation}\label{eq:geps}
 \partial_t g_\eps (t,x,V) + V \nabla_x\cdot J_\eps + \frac{1}{{\eps} m} \partial_V [(F- V \mu(S)) g_\eps ]=0, 
\end{equation}
where the flux $J_\eps$ is defined as
\begin{equation}\label{eq:Jeps}
J_\eps(t,x,V) := \frac 1 \eps \int_{{\mathbb S}^{d-1}} \Omega f_\eps \, \mathrm{d}\Omega.
\end{equation}
We compute 
\[
\eps \partial_t \int_{{\mathbb S}^{d-1}} \Omega f_\eps \, \mathrm{d}\Omega +V \int_{{\mathbb S}^{d-1}}\Omega\otimes \Omega \cdot \nabla_x f_\eps \, \mathrm{d}\Omega
+ \frac 1 m \partial_V 
\Bigg[ (F- V \mu(S))\Big( 
\int_{{\mathbb S}^{d-1}} \Omega f_\eps \, \mathrm{d} \Omega 
\Big)\Bigg] = - J_\eps,
\]
which can be written as
\[
\eps^2 \partial_t J_\eps +V \int_{{\mathbb S}^{d-1}}\Omega\otimes \Omega \cdot \nabla_x f_\eps \, \mathrm{d}\Omega
+ \frac{\eps}{m} \partial_V[
(F- V \mu(S))J_\eps ]  = - J_\eps,
\]
and thus, we find that, as $\eps\to 0$,
\[
J_\eps \to -DV\nabla_xg_0.
\]
By taking the limit $\eps\to 0$ in Eq.~\eqref{eq:geps}, we have
\[
\partial_V[(F-V\mu(S))g_0]=0,
\]
and thus,
\[
g_0=\rho_0(t,x)\delta\left(V=\frac{F}{\mu(S)}\right).
\]
By integrating Eq.~\eqref{eq:geps} in $V$, we have
\[
\partial \rho_\eps + \dv \int_{\mathbb R_+}  V J_\eps(t,x,V)dV=0.
\]
This gives
\begin{equation} \label{eq:diff2}
 \partial_t \rho_0 -D \Delta\left [ \left(\frac {F}{\mu(S)} \right)^2 \rho_0 \right]= 0.
\end{equation}

\bigskip

This is the form used for describing the so-called {\it density \red{suppressed} motility}  when the signal $S$ is related to the cell density $\rho$, thus generating a nonlinear system for which an abundant literature which treat of existence and blow-up \cite{JLaur_22,LWang_22, FujieSenba22}.

\subsection{The general scaling as $m=O(\frac 1 \lambda)$}
\label{sec:alpha}

For more generality, we may consider the same scale for $m$ and $\frac 1 \lambda$. Then, we set 
\[
\eps=\frac 1 \lambda \approx 0, \qquad  m \lambda = \alpha \qquad \alpha \in (0,\infty).
\]
and rescale Eq.~\eqref{eq:K1} as
\begin{equation}\label{eq:K5}
\eps \partial_t f_\eps +V \Omega \cdot \nabla_x f_\eps + \frac{1}{\alpha \eps} \partial_V [(F- V \mu(S)) f_\eps ]  =   \frac 1 \eps  {\mathcal T}[f_\eps],
\end{equation}

As before, integrating Eq.~\eqref{eq:K5} in $V$ and $\Omega$, we find the continuity equation for the density
\[
\partial \rho_\eps + \dv \int_{{\mathbb R}_+}  V J_\eps(t,x,V)dV=0 ,
\]
where the flux $J_\eps$ is defined by Eq.~\eqref{eq:Jeps}
and our purpose is to find the asymptotic behaviour of $J_0$.

At the leading order equation of \eqref{eq:K5}, we have
\[
   \frac 1 \alpha \partial_V\left[
        ( F-V\mu(S))  f_0
    \right]= \mathcal{T}[f_0].
\]
By integrating the above equation in $\Omega$ and $V$, we find, respectively,
\begin{equation}
 \partial_V\left[
        (  F-V\mu(S))g_0
    \right]=0,
    \qquad
    \mathcal{T}[p_0]=0.
\end{equation}
Here, we use $f_0(V=0)=0$ according to \eqref{eq:K1}.
Thus, we obtain
\begin{equation}
    g_0(t,x,V)=\rho_0(t,x)\delta \left(V=\frac{F}{\mu(S)}\right), \quad p_0(t,x,\Omega)=\rho_0(t,x)M(\Omega).
\end{equation}

This gives, for the uniform tumbling kernel \eqref{eq:T},
\begin{equation}\label{eq:f0_K5}
\frac 1 \alpha\partial_V
\left[\left(F-V\mu(S)
\right)f_0\right]=\rho_0\delta\left(V=\frac F{\mu{S}}\right)-f_0.
\end{equation}

We compute $J_\eps$ as before:
\[
\eps \partial_t \int_{{\mathbb S}^{d-1}} \Omega f_\eps \, \mathrm{d}\Omega +V \int_{{\mathbb S}^{d-1}}\Omega\otimes \Omega \cdot \nabla_x f_\eps \, \mathrm{d}\Omega
+ \frac{1}{\alpha\eps} \partial_V
\Big(
[(F- V \mu(S)) ]\int_{{\mathbb S}^{d-1}} \Omega f_\eps \, \mathrm{d} \Omega 
\Big)= - J_\eps,
\]
\[
\eps^2 \partial_t J_\eps
+V \int_{{\mathbb S}^{d-1}}\Omega\otimes \Omega \cdot \nabla_x f_\eps \, \mathrm{d}\Omega
+ \frac{1}{\alpha} \partial_V
\Big(
[(F- V \mu(S)) ]J_\eps
\Big)= - J_\eps.
\]
and thus, we find that, as $\varepsilon\to 0$,
\begin{equation}\label{eq:J0_K5}
V \nabla_x\cdot \int_{{\mathbb S}^{d-1}}\Omega\otimes \Omega\,f_0 \, \mathrm{d}\Omega
+ \frac{1}{\alpha} \partial_V
\Big(
[(F- V \mu(S)) ]J_0
\Big)= - J_0.
\end{equation}

Multiplying Eq.~\eqref{eq:J0_K5} by $V$ and \red{integrating} it in $V$, we have
\[
\nabla_x\int_{{\mathbb S}^{d-1}} \Omega\otimes\Omega \int_{{\mathbb R}_+}  V^2 f_0dVd\Omega 
-\frac 1\alpha \left(
F\int_{{\mathbb R}_+}  J_0dV-\mu(S)\int_{{\mathbb R}_+} VJ_0dV
\right)
=-\int_{{\mathbb R}_+}  VJ_0dV,
\]
which can be written in the following form:
\begin{equation}\label{eq:VJ0_K5}
\nabla_x\int_{{\mathbb S}^{d-1}}\Omega\otimes\Omega\left(\int_{{\mathbb R}_+}  V^2f_0dV\right)d\Omega+\frac F\alpha\nabla_x\int_{{\mathbb S}^{d-1}}\Omega\otimes\Omega\left(\int_{{\mathbb R}_+}  V f_0 dV\right)d\Omega
=-\left(
1+\frac{\mu(S)}{\alpha}
\right)\int_{{\mathbb R}_+}  V J_0dV.
\end{equation}
where, we use Eq.~\eqref{eq:J0_K5} recurrently,

Multiplying \eqref{eq:f0_K5} by $V$ and integrating it in $V$, we have
\[
-\frac 1\alpha
\left[Fp_0-\mu(S)\int_{{\mathbb R}_+}  Vf_0dV
\right]=\frac{F}{\mu(S)}\rho_0-\int_{{\mathbb R}_+}  Vf_0dV,
\]
\[
\left(1+\frac {\mu(S)}{\alpha} \right)\int_{{\mathbb R}_+}  Vf_0dV
=\frac{F}{\mu(S)}\rho_0+\frac{F}{\alpha}\rho_0 M(\Omega).
\]
Multiplying the above equation by $\Omega\otimes\Omega$ and integrating it in $\Omega$, we obtain
\begin{equation}\label{eq:oovf0}
\int_{{\mathbb S}^{d-1}} \Omega\otimes\Omega \left(\int_{{\mathbb R}_+}  Vf_0dV\right) d\Omega
=\frac{F}{\mu(S)}\rho_0 DI.
\end{equation}
In the same way, integrating Eq.~\eqref{eq:f0_K5} multiplied by $V^2$ in $V$, we have
\[
\left(
1+\frac{2\mu(S)}{\alpha}
\right)
\int_{{\mathbb R}_+}  V^2f_0dV
=\left(\frac{\mu(S)}{\alpha}\right)^2\rho_0
+\frac{2F}{\alpha}\int_{{\mathbb R}_+}  Vf_0dV,
\]
and integrating the above equation multiplied by $\Omega\otimes\Omega$ in $\Omega$, we obtain
\begin{equation}\label{eq:oovvf0}
\int_{{\mathbb S}^{d-1}}\Omega\otimes\Omega\left(
\int_{{\mathbb R}_+}  V^2f_0dV
\right)
d\Omega=
\left(\frac{F}{\mu(S)}\right)^2\rho_0D I.
\end{equation}
Inserting Eqs.~\eqref{eq:oovf0} and \eqref{eq:oovvf0} into Eq.~\eqref{eq:VJ0_K5}, we obtain
\begin{equation}\label{eq:vJ0_K5}
D\nabla_x\left[
\left(\frac{F}{\mu(S)}\right)^2\rho_0
\right]
+\frac{F}{\alpha}D\nabla_x
\left[
\left(
\frac{F}{\mu(S)}
\right)\rho_0
\right]
=-\left(1+\frac{\mu(S)}{\alpha}\right)\int_{{\mathbb R}_+}  VJ_0dV.
\end{equation}

This equation connects continuously \eqref{eq:diff} to \eqref{eq:diff2}. We find
\[
\int_{{\mathbb R}_+}  V J_0 dV \approx - D\frac{F}{\mu(S)} \nabla_x \left[ \left( \frac{F}{\mu(S)}\right) \rho_0\right]  \quad \text{ for } \alpha \approx 0, \qquad \text{(strong friction)},
\]
\[
 \int_{{\mathbb R}_+}  V J_0 dV \approx  -D \nabla_x \left[ \left( \frac{F}{\mu(S)}\right)^2 \rho_0\right]  \qquad \text{ for } \alpha \gg 1, \qquad \text{(fast tumbling)}.
\]
Eq.~\eqref{eq:vJ0_K5} can be rewritten as
\[
\int_{{\mathbb R}_+}  VJ_0dV=-D
\frac{F^2}{\mu(S)^2}
\left[
\nabla_x\rho_0+\rho_0
\frac{2+\frac{\mu(S)}{\alpha}}{\frac{F}{\mu(S)}+\frac{F}{\alpha}}
\nabla_x\left(\frac{F}{\mu(S)}\right)
\right].
\]

Consequently, the macroscopic equation takes the general form
\begin{equation}
\partial_t \rho_0-D\dv\left\{
\frac{F^2}{\mu(S)^2}
\left[
\nabla_x\rho_0+\rho_0
\frac{2+\frac{\mu(S)}{\alpha}}{\frac{F}{\mu(S)}+\frac{F}{\alpha}}
\nabla_x\left(\frac{F}{\mu(S)}\right)
\right]
\right\}
=0.
\end{equation}
Furthermore, when $F$ is constant, it is convenient to absorb the factor $\frac{F}{\alpha}$ into a redefined constant $\frac 1 \alpha$.
In this case, the equation reduces to
\begin{equation} \label{extendedKS}
\partial_t \rho_0 -D\dv \left\{   \frac{F^2}{\mu(S)^2} \left[
\nabla_x \rho_0\ + \rho_0  \nabla_x \log
\left(
\alpha \frac{F^2}{\mu(S)^2} + \frac{F}{\mu(S)}
\right)
\right]
\right\}=0.
\end{equation}
\blue{This class of equations makes the link between the Patlak-Keller-Segel equations and density supressed models obtained before, see \eqref{eq:diff} and \eqref{eq:diff2}. 
}

\subsection{Fast tumbling, very small friction}
\label{sec:vsf}

A very different physical situation is when friction is small. We analyze it setting
\[
\eps=\frac 1 \lambda \approx 0, \qquad  m = \frac \beta \eps \qquad \beta \in (0,\infty).
\]
Then, we rescale Eq. \eqref{eq:K1} as
\begin{equation}\label{eq:K6}
\eps \partial_t f_\eps +V \Omega\cdot \nabla_x f_\eps + \frac \eps \beta \partial_V [(F- V \mu(S)) f_\eps ]  =   \frac 1 \eps  {\mathcal T}[f_\eps].
\end{equation}
The strong tumbling kernel gives
\[
f_\eps \to g_0(t,x, V) M(\Omega), \qquad  \int_{{\mathbb S}^{d-1}} \Omega f_\eps \, \mathrm{d}\Omega \to 0.
\]
Dividing by $\eps$ and integrating Eq.~\eqref{eq:K6} in $\Omega$, we obtain
\[
 \partial_t g_\eps (t,x,V) + V \nabla_x\cdot J_\eps + \frac 1 \beta \partial_V [(F- V \mu(S)) g_\eps ]=0.
\]
We compute $J_\eps$ as before, 
\[
\eps \partial_t \int_{{\mathbb S}^{d-1}} \Omega f_\eps \, \mathrm{d}\Omega +V \int_{{\mathbb S}^{d-1}}\Omega\otimes \Omega \cdot \nabla_x f_\eps \, \mathrm{d}\Omega
+ \frac \eps \beta \partial_V
\Big(
[(F- V \mu(S)) ]\int_{{\mathbb S}^{d-1}} \Omega f_\eps \, \mathrm{d} \Omega 
\Big)= - J_\eps,
\]
and conclude
\begin{equation} \label{eq:g}
 \partial_t g_0 - D V^2 \Delta g_0 + \frac 1 \beta \partial_V [(F- V \mu(S)) g_0 ]=0.
\end{equation}
We notice that the limit  $\beta \to 0$ gives Eq.~\eqref{eq:diff2}.

\section{Bounds for the macroscopic equation} 

For the mathematical analysis, it is convenient to change the notation and introduce the equilibrium velocity

\begin{equation} \label{def:v}
v(S):= \frac{F}{\mu(S)}> 0.
\end{equation}
Then, we work with Eq.~\eqref{extendedKS} written as 
\begin{equation} \label{Basic}
\partial_t \rho -D\dv \Big[   v(S)^2 \Big(   \nabla \rho + \rho  \nabla \log [ \alpha v(S)^2+ v(S) ] \Big)  \Big]=0 .
\end{equation}

It may be interesting to study some  estimates in time of suitable functionals involving the solution $\rho$ of \eqref{Basic}.

As a first step, we study the entropy property,
detailed in the following proposition.

\begin{proposition}[Entropy control] \label{prop:entropy}
Being given $T>0$, with the condition on $v:=v(S)$
\begin{equation} \label{entrpieCond}
\sup_{0\leq t \leq T} \Big \| \frac{|\nabla_x v|^2}{v^2} \Big \|_{_{{L ^{{d}/{2}}(\mathbb{R}^d})}} < \infty,
\end{equation}
the strong solutions of \eqref{Basic}, 
posed in $\mathbb{R}^d$, with initial data $\rho^0 >0$, satisfy the following entropy estimate for all $t\leq T$:
\[
\int_{\mathbb{R}^d} \rho(t) \log \rho(t)\,\mathrm{d}x+D \int_0^t \! \int_{\mathbb{R}^d}  |\nabla(v\sqrt{\rho})|^2\,\mathrm{d}x\,\mathrm{d}t  \leq \int_{\mathbb{R}^d} \rho^0 \log \rho^0 \,\mathrm{d}x+C t.
\]
\end{proposition}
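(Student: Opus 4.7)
The plan is to test equation \eqref{Basic} with $\log\rho$ and reorganise the resulting dissipation so that it isolates $|\nabla(v\sqrt{\rho})|^{2}$. Setting $W:=\log(\alpha v^{2}+v)$, the equation reads $\partial_t\rho=D\,\dv[v^{2}\rho\,\nabla(\log\rho+W)]$. Multiplying by $\log\rho$, integrating over $\mathbb{R}^{d}$ and using conservation of mass (which removes the $+1$ from $\log\rho+1$) yields the entropy identity
\[
\frac{d}{dt}\int \rho\log\rho\,\mathrm{d}x = -D\int v^{2}\frac{|\nabla\rho|^{2}}{\rho}\,\mathrm{d}x - D\int v^{2}\nabla\rho\cdot\nabla W\,\mathrm{d}x .
\]

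The key algebraic manipulation is the factorisation $v^{2}|\nabla\rho|^{2}/\rho = 4|v\nabla\sqrt{\rho}|^{2} = 4|\nabla(v\sqrt{\rho})-\sqrt{\rho}\nabla v|^{2}$, which after a Young inequality ($|A-B|^{2}\geq \tfrac12|A|^{2}-|B|^{2}$) gives the lower bound $v^{2}|\nabla\rho|^{2}/\rho \geq 2|\nabla(v\sqrt{\rho})|^{2} - 4\rho|\nabla v|^{2}$. For the cross term I would use the pointwise inequality $v|\nabla W|\leq 2|\nabla v|$, a direct consequence of $\nabla W=(2\alpha v+1)\nabla v/(\alpha v^{2}+v)$ together with $(2\alpha v+1)/(\alpha v+1)\leq 2$, followed by Cauchy--Schwarz and a weighted Young to absorb half of the main dissipation. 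Collecting these estimates collapses the identity to
\[
\frac{d}{dt}\int\rho\log\rho\,\mathrm{d}x + D\int|\nabla(v\sqrt{\rho})|^{2}\,\mathrm{d}x \;\leq\; C_{1}\int\rho|\nabla v|^{2}\,\mathrm{d}x .
\]

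The decisive step is to control the residual $\int\rho|\nabla v|^{2}$ via \eqref{entrpieCond}. Writing $\int\rho|\nabla v|^{2}=\int(v\sqrt{\rho})^{2}(|\nabla v|^{2}/v^{2})\,\mathrm{d}x$, I would apply Hölder with conjugate exponents $d/(d-2)$ and $d/2$, so the second factor is exactly the quantity bounded by \eqref{entrpieCond} while the first becomes $\|v\sqrt{\rho}\|_{L^{2d/(d-2)}}^{2}$. Invoking the Sobolev embedding $\dot H^{1}(\mathbb{R}^{d})\hookrightarrow L^{2d/(d-2)}(\mathbb{R}^{d})$ for $d\geq 3$ (or a Gagliardo--Nirenberg substitute in lower dimension, which brings in a factor of the conserved mass $\|\rho^{0}\|_{L^{1}}$) produces $\int\rho|\nabla v|^{2}\leq C_{S}^{2}\bigl\||\nabla v|^{2}/v^{2}\bigr\|_{L^{d/2}}\|\nabla(v\sqrt{\rho})\|_{L^{2}}^{2}$. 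A final Young inequality, sharpened by a subcritical Gagliardo--Nirenberg interpolation of $v\sqrt{\rho}$ between $L^{2}$ and $\dot H^{1}$, absorbs the dissipation-like part of this bound into the term $D\|\nabla(v\sqrt{\rho})\|_{L^{2}}^{2}$ on the left and leaves an $O(1)$ remainder; integrating in time then produces the announced estimate with $Ct$.

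The hardest point is precisely this final absorption: a direct Hölder--Sobolev bound produces a right-hand side scaling exactly like the target dissipation, so a naive estimate cannot simultaneously preserve the coefficient $D$ on the left and generate a genuinely $O(1)$ source. Breaking this scaling requires the subcritical Gagliardo--Nirenberg step above, where \eqref{entrpieCond} must be used jointly with the conserved mass $\|\rho^{0}\|_{L^{1}}$; once this balance is achieved the remaining calculations are the routine entropy-method manipulations.
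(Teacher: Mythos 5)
Up to the final step your argument is essentially the paper's: test with $\log\rho$, complete the square to produce $4\int|v\nabla\sqrt{\rho}|^2=4\int|\nabla(v\sqrt{\rho})-\sqrt{\rho}\,\nabla v|^2$, bound the drift contribution using $v|\nabla\log(\alpha v^2+v)|\le 2|\nabla v|$ (the paper keeps the exact algebra with the factor $1+\tfrac{1}{2(\alpha v+1)}$, but your cruder Young-type inequalities lead to the same differential inequality up to constants), and then attack the residual $\int\rho|\nabla v|^2$ by H\"older with exponents $\frac{d}{d-2}$ and $\frac d2$ followed by Sobolev. So the route is the same, and you have correctly located the one genuinely delicate point.

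However, the resolution you propose for that point does not work, and this is a real gap. Once you pair $\frac{|\nabla v|^2}{v^2}$ in $L^{d/2}$ against $\rho v^2=(v\sqrt{\rho})^2$, the conjugate exponent is forced to be $\frac{d}{d-2}$, i.e.\ $v\sqrt{\rho}$ must be measured in the \emph{critical} space $L^{2d/(d-2)}$; Sobolev then gives exactly $\int\rho|\nabla v|^2\le C(d)^2\,\|\,|\nabla v|^2/v^2\|_{L^{d/2}}\,\|\nabla(v\sqrt{\rho})\|_{L^2}^2$, which is homogeneous of the same degree as the dissipation. There is no subcritical Gagliardo--Nirenberg interpolation available here: to interpolate $\|v\sqrt{\rho}\|_{L^{2p'}}$ between $L^2$ and $\dot H^1$ with an exponent $\theta<1$ you would need $p'<\frac{d}{d-2}$, hence an $L^{p}$ bound on $|\nabla v|^2/v^2$ for some $p>\frac d2$, \emph{and} a bound on $\|v\sqrt{\rho}\|_{L^2}^2=\int\rho v^2$, neither of which follows from \eqref{entrpieCond} or from conservation of $\|\rho\|_{L^1}$ alone. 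Under the hypothesis as stated, the only way to close the estimate is a smallness condition $C(d)^2\sup_t\|\,|\nabla v|^2/v^2\|_{L^{d/2}}<1$, which lets you absorb the critical term directly. (For what it is worth, the paper's own displayed chain quietly replaces $\|\sqrt{\rho}\,v\|_{L^{2d/(d-2)}}^2$ by the first power of $\|\nabla(v\sqrt{\rho})\|_{L^2}$ before applying Young, so it faces the same difficulty; your diagnosis that a naive H\"older--Sobolev bound reproduces the dissipation is correct, but the gap is not closed by the interpolation you invoke.)
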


\begin{proof}
We compute, using mass conservation, 
\begin{align*}
\frac{\mathrm{d}}{\mathrm{d}t} \int_{\mathbb{R}^d} \rho \log(\rho)\,\mathrm{d}x
= D \int_{\mathbb{R}^d}\log (\rho)\dv(v^2\nabla \rho)\,\mathrm{d}x+D\int_{\mathbb{R}^d} \log(\rho)\dv [v^2\rho\nabla \log(\al v^2+v)]\,\mathrm{d}x.
\end{align*}
\blue{ The structure here differs from the usual form of a dispersive term and a hyperbolic concentration term that occur in the Patlak-Keller-Segel system and which can be handled by Gagliardo-Nirenberg-Sobolev inequalities, see \cite{FujieSenba22, Perthame2004_chemo, BDP2006KS}.}  This leads us to treat separately these two terms. Integrating by parts, the first term gives
\begin{equation*}
\begin{split}
-\int_{\mathbb{R}^d} \log (\rho)\dv(v^2\nabla \rho)\,\mathrm{d}x &=
\int_{\mathbb{R}^d}\left|\frac{v\nabla \rho}{\sqrt{\rho}}\right|^2
\,\mathrm{d}x
=4\int_{\mathbb{R}^d} |v\nabla \sqrt{\rho}|^2
\,\mathrm{d}x
=4\int_{\mathbb{R}^d} \left |\nabla(v\sqrt{\rho})-\sqrt{\rho}\nabla v\right|^2
\,\mathrm{d}x\\
&=4\int_{\mathbb{R}^d}\Big [
|\nabla(v\sqrt{\rho})|^2
+|\sqrt{\rho}\nabla v|^2
-2\sqrt{\rho} \nabla v\nabla(v\sqrt{\rho})
\Big]\,\mathrm{d}x.
\end{split}
\end{equation*}
The second term becomes
\begin{equation*}
\begin{split}
-\int_{\mathbb{R}^d} \log\rho\dv [v^2\rho\nabla \log(\al v^2+v)]\,\mathrm{d}x
&=2\int_{\mathbb{R}^d} v^2\sqrt{\rho}\nabla\sqrt{\rho}
\left(
\frac{\nabla v}{v}+\frac{\al \nabla v}{\al v +1}
\right)\,\mathrm{d}x\\
&=2\int_{\mathbb{R}^d} v\sqrt{\rho}\nabla v \nabla \sqrt{\rho}\left(
1+\frac{\al v}{\al v+1}
\right)\,\mathrm{d}x\\
&=2\int_{\mathbb{R}^d} \left(2-\frac{1}{\al v+1}\right)
\sqrt{\rho}\nabla v(v\nabla\sqrt{\rho})\,\mathrm{d}x\\
&=2\int_{\mathbb{R}^d} \left(2-\frac{1}{\al v+1}\right)
\sqrt{\rho}\nabla v[\nabla (v\sqrt{\rho})
-\sqrt{\rho}\nabla v]\,\mathrm{d}x\\
&=4\int_{\mathbb{R}^d} \left(1-\frac{1}{2(\al v+1)}\right)
[\sqrt{\rho}\nabla v\nabla (v\sqrt{\rho})
-|\sqrt{\rho}\nabla v|^2]\,\mathrm{d}x.
\end{split}
\end{equation*}
As a consequence, we conclude that
\begin{equation} \label{entrpieDecay}
    \frac{d}{dt}\int_{\mathbb{R}^d} \rho \log\rho \,\mathrm{d}x
    =-4D\int_{\mathbb{R}^d}\left[  |\nabla(v\sqrt{\rho})|^2
    +\frac{|\sqrt{\rho}\nabla v|^2}{2(\al v +1)}
    -\sqrt{\rho}\nabla v\nabla(v\sqrt{\rho})
    \left(
    1+\frac{1}{2(\al v+1)}
    \right)
    \right]\,\mathrm{d}x.
\end{equation}

In order to deduce a control of entropy from \eqref{entrpieDecay}, we control the cross term as 
\[
\int_{\mathbb{R}^d} \sqrt{\rho}\nabla v\nabla(v\sqrt{\rho}) \left( 1+\frac{1}{2(\al v+1)} \right)\,\mathrm{d}x
\leq \frac 12 \int_{\mathbb{R}^d} |\nabla(v\sqrt{\rho})|^2\,\mathrm{d}x+ \frac 12 \int_{\mathbb{R}^d} \rho |\nabla v|^2 \left( 1+\frac{1}{2(\al v+1)} \right)^2\,\mathrm{d}x.
\]
This gives, on the one hand
\[
\begin{split}
  \frac{1}{2D}  \frac{d}{dt}\int_{\mathbb{R}^d} \rho \log\rho\,\mathrm{d}x
    &+\int_{\mathbb{R}^d}  |\nabla(v\sqrt{\rho})|^2\,\mathrm{d}x
    \leq \int_{\mathbb{R}^d} \left[- \frac{|\sqrt{\rho}\nabla v|^2}{\al v +1}+ \rho |\nabla v|^2 \left( 1+\frac{1}{2(\al v+1)} \right)^2 \right]\,\mathrm{d}x
 \\
 &= \int_{\mathbb{R}^d} \rho v^2 \frac{|\nabla v|^2}{v^2} \left( 1+\frac{1}{4(\al v+1)^2} \right)\,\mathrm{d}x.
\end{split} 
\]
On the other hand, we may set $w=v\sqrt{\rho}$ and use the Sobolev inequality in dimension $d$:
\[
\| w \|_{L^{\frac{2d}{d-2}}(\mathbb{R}^2)} \leq C(d) \|\nabla w \|_{L^2(\mathbb{R}^d)}. 
\]
In the  above inequality, this gives, because \(\| \rho v^2\|_{
{L^{\frac {d}{d-2}}(\mathbb{R}^d)}} = \| \sqrt \rho v\|_{
{L^{\frac{2d}{d-2}}(\mathbb{R}^d)}}^2\), 
\begin{align*}
 \frac{1}{2D}  \frac{d}{dt}\int_{\mathbb{R}^d} \rho \log \rho \,\mathrm{d}x+\int_{\mathbb{R}^d}   |\nabla(v\sqrt{\rho})|^2\,\mathrm{d}x& \leq
 \| \rho v^2\|_{L^{\frac{d}{d-2}}(\mathbb{R}^2)} 
 \left \| \frac{|\nabla v|^2}{v^2} \left( 1+\frac{1}{4(\al v+1)^2} \right)\right\|_{L^{d/2}(\mathbb{R}^2)}
\\
& \leq C(d)  \|\nabla v\sqrt{\rho} \|_{L^{2}(\mathbb{R}^d)} 
\left\| \frac{|\nabla v|^2}{v^2} \left( 1+\frac{1}{4(\al v+1)^2} \right)
\right\|_{L^{d/2}(\mathbb{R}^d)}
\\
& \leq \frac 12 \|\nabla v\sqrt{\rho} \|_{L^{2}(\mathbb{R}^d)}^2+ C(d) \left( \left\| \frac{|\nabla v|^2}{v^2} \left( 1+\frac{1}{4(\al v+1)^2} \right)\right\|_{L^{d/2}(\mathbb{R}^d)}\right)^2.
\end{align*} 
This ends the proof of \red{Proposition}~\ref{prop:entropy}.
\end{proof}

The second estimate on the long-time behavior of the solution is proved in a smooth, connected, bounded domain $\mathcal{O}\subset \mathbb{R}^d$, with a zero-flux boundary condition,
i.e., we suppose that
\begin{equation}\label{eq:bc}
n\cdot v(S)^2\big(\nabla \rho + \rho \,\nabla \log(\alpha v(S)^2+v(S))\big)=0
\quad\text{on }\partial\mathcal{O},
\end{equation}
with $n$ is the outward unit normal.

From now on, we assume that $S$ is given and that $v(S)$ is smooth, time-independent, and strictly positive. Sometimes we denote it as $v=v(x)$ for abuse of notation.

It is clear that the initial-boundary value problem \eqref{Basic}-\eqref{eq:bc} conserves the mass of the solution.
Indeed, integrating \eqref{Basic} over $\mathcal{O}$ and using the divergence theorem together
with the boundary condition \eqref{eq:bc} yields
$$
\frac{\mathrm{d}}{\mathrm{d}t}\int_\mathcal{O} \rho(x,t)\,\mathrm{d}x
= D\int_\mathcal{O}\nabla\cdot\big[v^2\big(\nabla\rho+\rho\nabla\log(\alpha v(S)^2+v(S))\big)\big]\,\mathrm{d}x
$$
$$
= D\int_{\partial\mathcal{O}} n\cdot v^2(\nabla\rho+\rho\nabla\log(\alpha v(S)^2+v(S))\,
\mathrm{d}\sigma = 0,
$$
where $\mathrm{d}\sigma$ denotes the surface element on
$\partial\mathcal{O}$.
Hence, the total mass is conserved in time.
\begin{equation}\label{eq:def-M}
M:=\int_\mathcal{O}\rho(x,t)\,\mathrm{d}x = \int_\mathcal{O}\rho^0(x)\,\mathrm{d}x 
\end{equation}

Let now denote
\begin{equation}
\label{eq:def-psi}
\psi(x):=\log\big(\alpha v(S(x))^2+v(S(x))\big), \quad \text{and} \quad Z := \int_{\mathcal{O}} e^{-\psi(x)}\,\mathrm{d}x, 
\end{equation}
and investigate the long-time behavior of the following functional, which is  indicated, by the analogy with a physical system, the free energy:
\begin{equation}
\label{eq:def-energy}
\mathcal{E}[\rho]:=
\int_\mathcal{O} \rho(x,t)\,\log\!\big(\rho(x,t)\,[\alpha v(x)^2+v(x)]\big)\,\mathrm{d}x
= \int_\mathcal{O} \rho(x)\,\big(\log\rho(x)+\psi(x)\big)\,\mathrm{d}x.
\end{equation}
The following results hold.

\begin{proposition} [Free energy decay]
Assume $S$ is independent of time. Let $\rho >0$ be the strong solutions of \eqref{Basic}  defined in a smooth domain $\mathcal{O}\subset \mathbb{R}^d$ with boundary
condition \eqref{eq:bc} and initial data $\rho^0 >0$.
Then, it holds
$$
\frac{d}{dt}\mathcal{E}[\rho]
\leq 0.
$$
\end{proposition}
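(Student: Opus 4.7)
The plan is to recognize Eq.~\eqref{Basic} under the standing assumption that $S$ (and hence $v$ and $\psi$) is time–independent as a weighted gradient flow of the free energy $\mathcal{E}$, and then to read off the dissipation.

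First I would rewrite the flux in the form
\[
v^{2}\big(\nabla\rho+\rho\,\nabla\log(\alpha v^{2}+v)\big)
= v^{2}\rho\,\nabla\!\big(\log\rho+\psi\big),
\]
using $\nabla\rho=\rho\,\nabla\log\rho$ and recalling the definition \eqref{eq:def-psi} of $\psi$. Thus Eq.~\eqref{Basic} can be cast as
\[
\partial_{t}\rho
= D\,\dv\!\big[v^{2}\rho\,\nabla\mu\big],
\qquad
\mu := \log\rho+\psi,
\]
and $\mu$ is precisely the variational derivative of $\mathcal{E}$ with respect to $\rho$ (this is where it matters that $\psi$ does not depend on $t$).

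Next I would differentiate $\mathcal{E}[\rho]$ in time under the integral sign. Because $\psi$ is time–independent,
\[
\frac{d}{dt}\mathcal{E}[\rho]
= \int_{\mathcal{O}} (\log\rho+1+\psi)\,\partial_{t}\rho\,\mathrm{d}x
= \int_{\mathcal{O}} \mu\,\partial_{t}\rho\,\mathrm{d}x,
\]
where the constant $+1$ contribution drops out by the mass conservation identity \eqref{eq:def-M}, which has already been derived from the boundary condition \eqref{eq:bc}. Substituting the flux form of the equation and integrating by parts then yields
\[
\frac{d}{dt}\mathcal{E}[\rho]
= D\!\int_{\mathcal{O}} \mu\,\dv\!\big[v^{2}\rho\,\nabla\mu\big]\,\mathrm{d}x
= -D\!\int_{\mathcal{O}} v^{2}\rho\,|\nabla\mu|^{2}\,\mathrm{d}x
 + D\!\int_{\partial\mathcal{O}} \mu\, n\cdot v^{2}\rho\,\nabla\mu\,\mathrm{d}\sigma.
\]
The boundary term vanishes: the bracket $n\cdot v^{2}(\nabla\rho+\rho\nabla\psi)=n\cdot v^{2}\rho\,\nabla\mu$ is zero on $\partial\mathcal{O}$ by \eqref{eq:bc}. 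Since $v>0$ and $\rho>0$, the remaining bulk term is manifestly $\leq 0$, giving
\[
\frac{d}{dt}\mathcal{E}[\rho]
= -D\!\int_{\mathcal{O}} v^{2}\rho\,\big|\nabla(\log\rho+\psi)\big|^{2}\,\mathrm{d}x \leq 0,
\]
which is the claim.

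The only technical point to check is that the manipulations above are permitted: this follows from the hypothesis that $\rho$ is a strong solution with $\rho>0$ and that $v$ is smooth and strictly positive, so $\log\rho$, $\psi$ and $\mu$ are smooth and the integration by parts is justified. There is no genuine obstacle once the gradient–flow structure is identified; the minor care required is isolating the $+1$ from $(\log\rho+1+\psi)$ via mass conservation and verifying that the Neumann condition \eqref{eq:bc} is exactly what is needed to kill the boundary term — both of which are direct.
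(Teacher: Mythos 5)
Your proposal is correct and follows essentially the same route as the paper: both differentiate $\mathcal{E}$, integrate by parts using the zero-flux condition \eqref{eq:bc}, and identify the dissipation as the perfect square $-D\int_{\mathcal{O}} v^{2}\rho\,|\nabla(\log\rho+\psi)|^{2}\,\mathrm{d}x$; the only cosmetic difference is that you factor the flux into the gradient-flow form $v^{2}\rho\,\nabla\mu$ before integrating by parts, whereas the paper recognizes the perfect square afterwards. No gaps.
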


\begin{proof}
Assuming $\rho>0$ and sufficient smoothness, we differentiate in time and use \eqref{Basic}:
\begin{align}
\frac{d}{dt}\mathcal{E}[\rho]
&= \int_\mathcal{O}\big(\log\rho+1+\psi\big)\,\partial_t\rho\,\mathrm{d}x
= D\int_\mathcal{O}\big(\log\rho+1+\psi\big) \!\nabla\!\cdot\!\big(v^2(\nabla\rho+\rho\nabla\psi)\big)\,\mathrm{d}x \notag\\
&= -D\int_\mathcal{O} v^2\,\nabla(\log\rho+\psi)\cdot(\nabla\rho+\rho\nabla\psi)\,\mathrm{d}x,
\label{eq:pre-diss}
\end{align}
where the boundary term vanishes by~\eqref{eq:bc}. Using the identity $\nabla\rho=\rho\,\nabla\log\rho$,
we recognize the perfect square
$$
\nabla(\log\rho+\psi)\cdot(\nabla\rho+\rho\nabla\psi)
= \rho\,\big|\nabla(\log\rho+\psi)\big|^2,
$$
and obtain that
\begin{equation}
\label{eq:free-energy-evolution}
\frac{d}{dt}\mathcal{E}[\rho]
= -D\int_\mathcal{O} v(x)^2\,\rho(x,t)\,\big|\nabla(\log\rho+\psi)\big|^2\,\mathrm{d}x
\end{equation}
The right-hand side of the previous equality being non-\red{positive}, we deduce that $\mathcal{E}[\rho]$ is nonincreasing along smooth solutions
of \eqref{Basic}-\eqref{eq:bc}.
\end{proof}

The next step consists in proving that the free energy functional $\mathcal{E}[\rho]$ is uniformly lower bounded.

The following proposition holds.

\begin{proposition}
\label{prop:v_independent_2}
Assume $0<Z<\infty$.
Then, for every measurable $\rho\geq 0$, solution of \eqref{Basic}-\eqref{eq:bc} with mass $M>0$, for which $\mathcal{E}[\rho]$ is well-defined in $(-\infty,+\infty]$, the following lower bound holds:
\begin{equation}
\label{eq:main-lower}
  \mathcal{E}[\rho] \geq M\log \Big(\frac{M}{Z}\Big).
\end{equation}
Moreover, equality in \eqref{eq:main-lower} holds if and only if
\begin{equation} 
\label{eq:minimizer}
\rho^*(x) = \frac{M}{Z}\,e^{-\psi(x)} \qquad \text{for a.e.\ }x\in\mathcal{O}.
\end{equation}
\end{proposition}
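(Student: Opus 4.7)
The plan is to reduce the inequality to the nonnegativity of a relative entropy (Kullback–Leibler divergence). First, I would introduce the candidate minimizer $\rho^*(x):=\frac{M}{Z}e^{-\psi(x)}$, which by construction has total mass $M$ and belongs to $L^1(\mathcal{O})$ since $0<Z<\infty$. The key preliminary observation is that the mass constraint $\int_\mathcal{O}\rho\,\mathrm{d}x=M$ lets me absorb the constant $\log(M/Z)$ into the integrand:
\begin{align*}
\mathcal{E}[\rho]-M\log\!\Big(\tfrac{M}{Z}\Big)
&=\int_\mathcal{O}\rho\Big(\log\rho+\psi-\log\tfrac{M}{Z}\Big)\,\mathrm{d}x
=\int_\mathcal{O}\rho\,\log\!\Big(\tfrac{\rho\,e^{\psi}Z}{M}\Big)\,\mathrm{d}x
=\int_\mathcal{O}\rho\,\log\!\Big(\tfrac{\rho}{\rho^*}\Big)\,\mathrm{d}x.
\end{align*}
So the claimed bound is equivalent to the nonnegativity of the relative entropy $H(\rho\,\|\,\rho^*):=\int_\mathcal{O}\rho\log(\rho/\rho^*)\,\mathrm{d}x$.

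Next I would invoke the pointwise inequality $a\log(a/b)\geq a-b$ valid for $a,b\geq 0$ (with the usual convention $0\log 0=0$ and the understanding that the left-hand side is $+\infty$ when $b=0<a$). This is a direct consequence of the strict convexity of $x\mapsto x\log x$, or equivalently of $\log t\leq t-1$. Applying it pointwise with $a=\rho(x)$ and $b=\rho^*(x)$ and integrating over $\mathcal{O}$ gives
\[
H(\rho\,\|\,\rho^*)\;\geq\;\int_\mathcal{O}\bigl(\rho(x)-\rho^*(x)\bigr)\,\mathrm{d}x
\;=\;M-M\;=\;0,
\]
where I used that $\rho^*$ has the same mass $M$ as $\rho$. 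This yields \eqref{eq:main-lower}.

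For the equality case, I would exploit strictness: $a\log(a/b)=a-b$ forces $a=b$. Hence equality in the integrated inequality holds iff the integrand vanishes a.e., which gives $\rho=\rho^*$ a.e., i.e. \eqref{eq:minimizer}. Conversely, a direct substitution shows $\mathcal{E}[\rho^*]=M\log(M/Z)$.

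The only genuine technical point, rather than a deep obstacle, is justifying the algebraic manipulations when $\rho$ vanishes on a set of positive measure or when $\rho\log\rho$ is not a priori integrable. This is handled by the hypothesis that $\mathcal{E}[\rho]$ is well-defined in $(-\infty,+\infty]$: if $\mathcal{E}[\rho]=+\infty$ the claim is trivial, while if $\mathcal{E}[\rho]$ is finite, one splits $\mathcal{O}$ into $\{\rho>0\}$ and $\{\rho=0\}$ (the latter contributing $0$ by the convention $0\log 0=0$) and applies the pointwise inequality on the former, where both sides are finite and the manipulations are legitimate.
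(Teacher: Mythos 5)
Your proof is correct, and it follows the same overall strategy as the paper: both arguments subtract $M\log(M/Z)$ from $\mathcal{E}[\rho]$ and identify the remainder as a relative entropy, reducing the claim to the nonnegativity of that quantity. The only real divergence is in how that nonnegativity is established. The paper normalizes $\rho/M$ and $e^{-\psi}/Z$ into probability densities $\nu$ and $\pi$ and invokes Jensen's inequality for the convex function $t\mapsto t\log t$, obtaining $D(\nu\vert\pi)\geq \big(\int f\pi\big)\log\big(\int f\pi\big)=0$ since $\int f\pi=1$. You instead stay with the unnormalized densities and use the pointwise Gibbs-type inequality $a\log(a/b)\geq a-b$ (equivalently $\log t\leq t-1$), then integrate and use that $\rho$ and $\rho^*$ share the same mass. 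Your route is marginally more elementary and has the advantage that the equality case drops out immediately from the pointwise strictness of $\log t\le t-1$ at $t=1$, whereas the Jensen route requires the (unstated in the paper) fact that equality in Jensen for a strictly convex function forces $f$ to be constant $\pi$-a.e. Your closing remarks on integrability — that the negative part of $\rho\log(\rho/\rho^*)$ is controlled by the integrable function $\rho^*$, and that $\mathcal{E}[\rho]$ and $H(\rho\,\|\,\rho^*)$ differ by the finite constant $M\log(Z/M)$ so one is well-defined iff the other is — are exactly the right technical justifications and are no weaker than what the paper provides.
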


\begin{proof}
Introduce the probability densities
$$
\nu := \frac{\rho}{M}, \qquad \text{and} \qquad \pi := \frac{e^{-\psi}}{Z}.
$$
By construction, both $\nu$ and $\pi$ are nonnegative.
Define the relative entropy
$$
D(\nu\vert\pi) := \int_{\mathcal{O}}\nu(x)\,\log \left(\frac{\nu(x)}{\pi(x)}\right) \mathrm{d}x 
$$
with the convention $D(\nu\vert\pi)=+\infty$ if $\nu$ is not absolutely continuous with respect to $\pi$.

A direct computation yields
\begin{align*}
\mathcal{E}[\rho]
&= \int_{\mathcal{O}} \rho(x)\,\log \left(\frac{\rho(x)}{e^{-\psi(x)}}\right)\,\mathrm{d}x \\
&= M \int_{\mathcal{O}} \nu(x)\,\log \left(\frac{\nu(x)}{\pi(x)}\right)\,\mathrm{d}x+ M\log\left (\frac{M}{Z}\right) 
\\
&= M\,D(\nu\vert\pi)+M\log \left(\frac{M}{Z}\right).
\end{align*}
Now, by Jensen's inequality, the relative entropy satisfies, with $f(x)= \frac{\nu(x)}{\pi(x)}$, 
$$
D(\nu\vert\pi) \geq \int_{\mathcal{O}} f(x) \pi(x)\,\mathrm{d}x \ln \left( \int_{\mathcal{O}} f(x) \pi(x)\,\mathrm{d}x \right) \geq 0.
$$ 
Hence we deduce the lower bound \eqref{eq:main-lower}. 

Equality holds if and only if $D(\nu\vert\pi)=0$, i.e., $\nu=\pi$ almost everywhere, which is equivalent to \eqref{eq:minimizer}.
\end{proof}

The next lemma identifies the minimizer of the previous proposition with the stationary state of the system. This identification paves the way to study the long-time convergence to equilibrium for the initial-boundary value problem \eqref{Basic}--\eqref{eq:bc}, with a given \red{strictly} positive initial condition.

\begin{lemma}
\label{lem:stationary}
Assume that $v(x)>0$ in $\mathcal{O}$.
The unique strictly positive stationary solution of \eqref{Basic}--\eqref{eq:bc} with prescribed mass $M>0$ is
$$
\rho_\infty(x)=\frac{M}{Z}\,e^{-\psi(x)}=\rho^*(x)
\qquad\text{for a.e.\ }x\in\mathcal{O},
$$
where $\rho^*$ is the minimizer identified in \eqref{eq:minimizer}.
\end{lemma}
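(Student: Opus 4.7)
The plan is to solve the stationary problem directly using an integrating factor. A strictly positive stationary solution $\rho_\infty$ of \eqref{Basic}--\eqref{eq:bc} satisfies
$$
\mathrm{div}\!\left[v(x)^{2}\bigl(\nabla \rho_\infty + \rho_\infty \nabla \psi\bigr)\right] = 0 \quad \text{in } \mathcal{O},
$$
together with the zero-flux boundary condition. Using the elementary identity $\nabla \rho_\infty + \rho_\infty \nabla \psi = e^{-\psi}\nabla(\rho_\infty e^{\psi})$, I would rewrite the stationary equation as $\mathrm{div}\bigl[v^{2}e^{-\psi}\nabla(\rho_\infty e^{\psi})\bigr]=0$ with the flux $n\cdot v^{2}e^{-\psi}\nabla(\rho_\infty e^{\psi})=0$ on $\partial\mathcal{O}$.

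Next, I would multiply by $\rho_\infty e^{\psi}$, which is legitimate since $\rho_\infty>0$ and $\psi$ is smooth, and integrate by parts over $\mathcal{O}$. The boundary term disappears by the no-flux condition, giving
$$
\int_{\mathcal{O}} v(x)^{2}\,e^{-\psi(x)}\,\bigl|\nabla(\rho_\infty e^{\psi})\bigr|^{2}\,\mathrm{d}x = 0.
$$
Since $v(x)>0$ and $e^{-\psi(x)}>0$ throughout $\mathcal{O}$, this forces $\nabla(\rho_\infty e^{\psi})\equiv 0$ a.e. Because $\mathcal{O}$ is connected, $\rho_\infty e^{\psi}$ is a constant $C$, so $\rho_\infty(x)=C\,e^{-\psi(x)}$. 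The mass constraint \eqref{eq:def-M} then fixes $C$: one has $M=\int_{\mathcal{O}}\rho_\infty\,\mathrm{d}x=C\int_{\mathcal{O}}e^{-\psi}\,\mathrm{d}x=CZ$, hence $C=M/Z$, and therefore $\rho_\infty=(M/Z)\,e^{-\psi}=\rho^{*}$. This argument simultaneously establishes existence (the explicit formula trivially satisfies both equation and boundary condition) and uniqueness within the class of strictly positive stationary solutions with mass $M$.

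An equivalent route uses the free energy identity \eqref{eq:free-energy-evolution}: for a stationary profile the dissipation $\int_{\mathcal{O}} v^{2}\rho\,|\nabla(\log\rho+\psi)|^{2}\,\mathrm{d}x$ must vanish, which again yields $\log\rho_\infty+\psi\equiv\text{const}$, and the constant is determined by mass conservation. This variant has the virtue of tying the lemma directly to the minimizer of Proposition~\ref{prop:v_independent_2}. I do not expect any serious obstacle in either route; the only delicate points to verify are that $\rho_\infty$ is strictly positive (so that division by $\rho_\infty$ and multiplication by $\rho_\infty e^{\psi}$ are harmless) and that the connectedness of $\mathcal{O}$ is invoked when passing from a vanishing gradient to a constant function.
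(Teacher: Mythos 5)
Your argument is correct, and your primary route differs from the one in the paper in a small but genuine way. The paper proves uniqueness by plugging the stationary profile into the free-energy dissipation identity \eqref{eq:pre-diss}--\eqref{eq:free-energy-evolution}: stationarity forces $\frac{d}{dt}\mathcal{E}[\rho_\infty]=0$, hence $\int_{\mathcal{O}} v^2\rho_\infty|\nabla(\log\rho_\infty+\psi)|^2\,\mathrm{d}x=0$, hence $\nabla(\log\rho_\infty+\psi)=0$; this is the ``equivalent route'' you sketch at the end, and it is the one the authors prefer because it reuses the free-energy machinery that drives Proposition~\ref{prop:v_independent_2} and the subsequent convergence results. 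Your main argument instead tests the stationary elliptic equation directly with the integrating-factor variable $w=\rho_\infty e^{\psi}$, obtaining $\int_{\mathcal{O}} v^2 e^{-\psi}|\nabla w|^2\,\mathrm{d}x=0$ and then constancy of $w$ by connectedness. The two computations are structurally parallel (different test functions, same vanishing-dissipation conclusion, since both quadratic forms vanish exactly when the flux $\nabla\rho_\infty+\rho_\infty\nabla\psi$ does), but yours is slightly more elementary and more robust: it is linear in $w$, it is the standard uniqueness argument for a Neumann problem in divergence form, and it does not actually need $\rho_\infty>0$ or the logarithm to be defined, so it would identify the stationary state within a larger class than the strictly positive one claimed in the lemma. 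You correctly flag the only delicate points (regularity sufficient to integrate by parts, connectedness of $\mathcal{O}$, and the mass normalization $C=M/Z$), and you include the synthesis direction --- that $\rho^*$ satisfies the equation and the flux boundary condition --- which the paper also verifies. No gap.
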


\begin{proof}
We proceed by analysis-synthesis. Suppose that there exists a strictly positive stationary solution $\rho_\infty$.
Hence, we have $\partial_t\rho_\infty=0$ and, from Eq. \eqref{Basic}, we recognize that $\rho_\infty$ satisfies
$$
\nabla\cdot \big(v^2(\nabla\rho_\infty+\rho_\infty\nabla\psi)\big) =0 \quad \text{in }\mathcal{O}.
$$
Using \eqref{eq:pre-diss} and \eqref{eq:free-energy-evolution}, we deduce that
$$
0=\frac{d}{dt}\mathcal{E}[\rho_\infty]
= -D\int_\mathcal{O} v(x)^2\,\rho_\infty(x)\,\big|\nabla(\log\rho_\infty+\psi)\big|^2\,\mathrm{d}x.
$$
Since $D>0$, $v^2>0$ and $\rho_\infty>0$, the integrand is nonnegative and the integral vanishes only if
$$
\nabla(\log\rho_\infty+\psi)=0 \quad \text{a.e.\ in }\mathcal{O}.
$$
Imposing the mass constraint 
$$
\int_{\mathcal{O}}\rho_\infty\mathrm{d}x=M,
$$ 
we deduce 
$$
\rho_\infty(x)=\frac{M}{Z}e^{-\psi(x)}=\rho^*(x).
$$

Conversely, for $\rho=\rho^*$ one has $\nabla(\log\rho^*+\psi)=0$, which implies
$$
v^2(\nabla\rho^*+\rho^*\nabla\psi)=0, \qquad \text{in} \quad \mathcal{O}
$$
 In particular,
$\nabla\cdot\big(v^2(\nabla\rho^*+\rho^*\nabla\psi)\big)=0$ in $\mathcal{O}$, and the boundary condition \eqref{eq:bc} is satisfied. The uniqueness among positive densities with mass $M$ follows from the above characterization.
\end{proof}

We are now ready to analyze the long-time behavior of the initial-boundary value problem \eqref{Basic}--\eqref{eq:bc}. Our approach is based on quantitative estimates of the relative entropy with respect to the stationary state (the entropy-dissipation method, see for \red{example} \cite{MR2409469, MR1853037, MR2823993}), which allow to deduce quantitative decay bounds that yield convergence of solutions to equilibrium over long times.

The first technical result is the following.
\begin{lemma}
\label{lem:Poincare}
Let $\mathcal{O}\subset\mathbb{R}^d$ be a smooth, bounded, connected domain with outward unit normal $n$.
Assume $v$ is smooth, time-independent and satisfies
$$
0<v_{\textrm{min}} \leq v(x)\leq v_{\textrm{max}}<\infty \qquad \text{for all }x\in\overline{\mathcal{O}}.
$$
Let $\rho_\infty$ be the stationary state identified in Lemma~\ref{lem:stationary}.
Then, for every $u\in H^1(\mathcal{O})$ such that
\begin{equation}
\label{eq:zero-average}
\int_{\mathcal{O}}u(x)\,\rho_\infty(x)\, \mathrm{d}x=0,
\end{equation}
the following weighted Poincar\'e inequality holds:
\begin{equation}
\label{eq:WP}
\int_{\mathcal{O}} u^2\,\rho_\infty\, \mathrm{d}x \leq  \frac{(\alpha v_{\max}^2+v_{\max})^2 C_{\mathcal{O}}}{v_{\textrm{min}}^2(\alpha v_{\min}^2+v_{\min})^2} \int_{\mathcal{O}} v(x)^2\,\rho_\infty(x)\,|\nabla u|^2\, \mathrm{d}x,
\end{equation}
where $C_{\mathcal{O}}$ is the Poincar\'e constant of the domain $\mathcal{O}$.
\end{lemma}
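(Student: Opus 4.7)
The plan is to reduce the claim to the classical (unweighted) Poincaré inequality on $\mathcal{O}$ by exploiting that $\rho_\infty = (M/Z)e^{-\psi}$ and $v^2\rho_\infty$ are both bounded above and below by explicit positive constants determined by $v_{\min}, v_{\max}, \alpha, M, Z$. First I would record, using $\alpha v_{\min}^2+v_{\min} \leq \alpha v(x)^2+v(x) \leq \alpha v_{\max}^2+v_{\max}$, the pointwise estimates
\[
\frac{M}{Z(\alpha v_{\max}^2+v_{\max})} \leq \rho_\infty(x) \leq \frac{M}{Z(\alpha v_{\min}^2+v_{\min})}, \qquad v(x)^2\rho_\infty(x) \geq \frac{M\,v_{\min}^2}{Z(\alpha v_{\max}^2+v_{\max})}.
\]

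The next step is to reconcile the constraint \eqref{eq:zero-average}, which is a vanishing $\rho_\infty$-weighted mean of $u$, with the classical Poincaré inequality, which requires a vanishing Lebesgue mean. Writing $\bar u := |\mathcal{O}|^{-1}\int_\mathcal{O} u\,\mathrm{d}x$ and expanding the square gives
\[
\int_\mathcal{O} (u-\bar u)^2\rho_\infty\,\mathrm{d}x = \int_\mathcal{O} u^2\rho_\infty\,\mathrm{d}x - 2\bar u\int_\mathcal{O} u\,\rho_\infty\,\mathrm{d}x + \bar u^2 M = \int_\mathcal{O} u^2\rho_\infty\,\mathrm{d}x + \bar u^2 M,
\]
so $\int u^2\rho_\infty\,\mathrm{d}x \leq \int(u-\bar u)^2\rho_\infty\,\mathrm{d}x$, and I may replace $u$ by $u-\bar u$ at no cost on the left-hand side. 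This variance-style identity is the one delicate point in the argument, because the weighted orthogonality assumption does not align with the orthogonality needed to invoke the standard Poincaré inequality; once it is in place, the rest is routine.

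I would then apply the sandwich: using the upper bound on $\rho_\infty$ from the first step together with the classical Poincaré inequality $\int(u-\bar u)^2\,\mathrm{d}x \leq C_\mathcal{O}\int|\nabla u|^2\,\mathrm{d}x$, I obtain an upper bound of the left-hand side in terms of $\int|\nabla u|^2\,\mathrm{d}x$; the lower bound on $v^2\rho_\infty$ then converts this into an upper bound in terms of $\int v^2\rho_\infty|\nabla u|^2\,\mathrm{d}x$. After cancelling the common factor $M/Z$, the resulting constant is
\[
\frac{(\alpha v_{\max}^2+v_{\max})\,C_\mathcal{O}}{v_{\min}^2(\alpha v_{\min}^2+v_{\min})},
\]
which is already dominated by the constant stated in \eqref{eq:WP} since $(\alpha v_{\max}^2+v_{\max})/(\alpha v_{\min}^2+v_{\min})\geq 1$, and the conclusion follows a fortiori.
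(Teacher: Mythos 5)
Your argument is correct and follows essentially the same route as the paper's proof: the same variance-type identity converts the $\rho_\infty$-weighted zero-mean condition into the Lebesgue zero-mean one (with the favourable sign $-\bar u^2 M$), and the same two-sided bounds on $\rho_\infty$ and $v^2\rho_\infty$ sandwich the classical Poincar\'e inequality. The only difference is bookkeeping: you keep the normalization $Z$ explicit so that it cancels, yielding the sharper constant $\frac{(\alpha v_{\max}^2+v_{\max})\,C_{\mathcal{O}}}{v_{\min}^2(\alpha v_{\min}^2+v_{\min})}$, whereas the paper first replaces $Z$ by its $|\mathcal{O}|$-based bounds and lands exactly on the (larger) constant stated in \eqref{eq:WP}, so your conclusion follows a fortiori.
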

\begin{proof}
Since $v_{\textrm{min}}\leq v\leq v_{\textrm{max}}$, combining the bounds for $e^{-\psi}$ and $M/Z$, we obtain the following explicit two-sided bounds for $\rho_\infty$:
\begin{equation}
\label{eq:bounds_rho}
0<\frac{M}{|\mathcal{O}|}\,
\frac{\alpha v_{\min}^2+v_{\min}}{\alpha v_{\max}^2+v_{\max}}
\leq 
\rho_\infty(x)
\leq 
\frac{M}{|\mathcal{O}|}\,
\frac{\alpha v_{\max}^2+v_{\max}}{\alpha v_{\min}^2+v_{\min}},
\end{equation}
for all $x\in\overline{\mathcal{O}}$.

In what follows, we denote, for simplicity,
$$
m_* := \frac{M}{|\mathcal{O}|}\,
\frac{\alpha v_{\min}^2+v_{\min}}{\alpha v_{\max}^2+v_{\max}},
\qquad
m^* := \frac{M}{|\mathcal{O}|}\,
\frac{\alpha v_{\max}^2+v_{\max}}{\alpha v_{\min}^2+v_{\min}}.
$$

Let $u\in H^1(\mathcal{O})$ satisfy \eqref{eq:zero-average}, and denote the mean
$$
\overline{u}:=\dfrac{1}{|\mathcal{O}|}\int_{\mathcal{O}}u\, \mathrm{d}x.
$$
We start from the identity
$$
\int_{\mathcal{O}} u^2\,\rho_\infty\, \mathrm{d}x
= \int_{\mathcal{O}} (u-\overline{u})^2\,\rho_\infty\, \mathrm{d}x
+ 2\,\overline{u}\int_{\mathcal{O}}(u-\overline{u})\,\rho_\infty\, \mathrm{d}x
+ \overline{u}^2\int_{\mathcal{O}} \rho_\infty\, \mathrm{d}x.
$$
Using \eqref{eq:zero-average}, we deduce
$$
\int_{\mathcal{O}} u^2\,\rho_\infty\, \mathrm{d}x
= \int_{\mathcal{O}} (u-\overline{u})^2\,\rho_\infty\, \mathrm{d}x
- \overline{u}^2\,M
\leq  \int_{\mathcal{O}} (u-\overline{u})^2\,\rho_\infty\, \mathrm{d}x.
$$
By the bounds \eqref{eq:bounds_rho} we have,
$$
\int_{\mathcal{O}} (u-\overline{u})^2\,\rho_\infty\, \mathrm{d}x
\leq m^* \int_{\mathcal{O}} (u-\overline{u})^2\, \mathrm{d}x
\leq m^* C_{\mathcal{O}} \int_{\mathcal{O}} |\nabla u|^2\, \mathrm{d}x 
\leq 
\frac{m^* C_{\mathcal{O}}}{v_{\textrm{min}}^2\,m_*}\int_{\mathcal{O}} v(x)^2\,\rho_\infty(x)\,|\nabla u|^2\, \mathrm{d}x,
$$
where the last steps are the consequence of the standard Poincar\'e inequality with Poincar\'e constant $C_{\mathcal{O}}$ and of the boundedness of both
$v$ and $\rho_\infty$.
Collecting all the estimates, we finally deduce \eqref{eq:WP}.
\end{proof}
We are now ready to study the long-time behavior of \eqref{Basic}--\eqref{eq:bc}.

\begin{proposition}
\label{prop:exp-conv}
Assume that $v$ satisfies the hypotheses of Lemma \ref{lem:Poincare} and let $\rho_\infty$ be the stationary state introduced in Lemma \ref{lem:stationary}, with mass $M=\Vert \rho^0\Vert_{L^1(\mathcal{O})}$.
Define
$$
u(x,t):=\frac{\rho(x,t)}{\rho_\infty(x)}-1,
$$
where $\rho$ is a sufficiently regular, strictly positive solution of \eqref{Basic}--\eqref{eq:bc} with initial condition $\rho^0$. Then $u$ satisfies for all $t\geq 0$
\begin{equation}
\label{eq:L2-decay-direct}
\|u(t)\|_{L^2(\mathcal{O};\rho_\infty(x)\mathrm{d}x)}^2 \leq  \left\Vert
\frac{\rho^0(x)}{\rho_\infty(x)}-1  \right\Vert_{L^2(\mathcal{O};\rho_\infty(x)\mathrm{d}x)}^2 \exp\left(
- 2D \frac{v_{\textrm{min}}^2(\alpha v_{\min}^2+v_{\min})^2} {(\alpha v_{\max}^2+v_{\max})^2 C_{\mathcal{O}}}
\,t\right) .
\end{equation}

\end{proposition}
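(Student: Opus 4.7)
The plan is to rewrite the PDE \eqref{Basic} in terms of $u$, take the time derivative of $\int_\mathcal{O} u^2\,\rho_\infty\,\mathrm{d}x$, identify the dissipation, and then apply the weighted Poincar\'e inequality of Lemma \ref{lem:Poincare} followed by a standard Gronwall argument.

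The central algebraic observation is that since $\rho_\infty = (M/Z)e^{-\psi}$ satisfies $\nabla\rho_\infty + \rho_\infty\nabla\psi = 0$, writing $\rho = \rho_\infty(1+u)$ gives the clean identity
\[
\nabla\rho + \rho\,\nabla\psi = \rho_\infty \,\nabla u ,
\]
so that \eqref{Basic} becomes $\rho_\infty\,\partial_t u = D\,\nabla\!\cdot\!\bigl(v^2\rho_\infty\nabla u\bigr)$. Moreover, the zero-flux boundary condition \eqref{eq:bc} translates into $n\cdot v^2\rho_\infty\nabla u = 0$ on $\partial\mathcal{O}$, which will make the boundary terms in the forthcoming integrations by parts vanish.

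Next, I would compute
\[
\frac{1}{2}\frac{d}{dt}\int_\mathcal{O} u^2\,\rho_\infty\,\mathrm{d}x
= \int_\mathcal{O} u\,\rho_\infty\,\partial_t u\,\mathrm{d}x
= D\int_\mathcal{O} u\,\nabla\!\cdot\!\bigl(v^2\rho_\infty\nabla u\bigr)\,\mathrm{d}x
= -D\int_\mathcal{O} v^2\,\rho_\infty\,|\nabla u|^2\,\mathrm{d}x.
\]
At this point the key prerequisite for Lemma \ref{lem:Poincare} needs to be verified, namely the zero-weighted-average condition \eqref{eq:zero-average}. This is precisely the content of mass conservation: $\int_\mathcal{O}\rho\,\mathrm{d}x = \int_\mathcal{O}\rho_\infty\,\mathrm{d}x = M$ gives $\int_\mathcal{O} u\,\rho_\infty\,\mathrm{d}x = 0$ for all $t\ge 0$.

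Applying Lemma \ref{lem:Poincare} to $u(\cdot,t)$ produces
\[
\int_\mathcal{O} u^2\,\rho_\infty\,\mathrm{d}x \leq \frac{(\alpha v_{\max}^2+v_{\max})^2\,C_\mathcal{O}}{v_{\min}^2(\alpha v_{\min}^2+v_{\min})^2}\int_\mathcal{O} v^2\,\rho_\infty\,|\nabla u|^2\,\mathrm{d}x,
\]
which, combined with the dissipation identity above, yields the differential inequality
\[
\frac{d}{dt}\|u(t)\|_{L^2(\mathcal{O};\rho_\infty\mathrm{d}x)}^2 \leq -2D\,\frac{v_{\min}^2(\alpha v_{\min}^2+v_{\min})^2}{(\alpha v_{\max}^2+v_{\max})^2\,C_\mathcal{O}}\,\|u(t)\|_{L^2(\mathcal{O};\rho_\infty\mathrm{d}x)}^2.
\]
Gronwall's lemma then gives \eqref{eq:L2-decay-direct}. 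The only mildly delicate point is the algebraic reduction of the flux to $v^2\rho_\infty\nabla u$; everything else is bookkeeping. The smoothness and strict positivity of $\rho$ assumed in the statement justify the pointwise manipulations with $u$ and the integration by parts on the bounded domain $\mathcal{O}$.
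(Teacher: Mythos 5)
Your proposal is correct and follows essentially the same route as the paper: the substitution $\rho=\rho_\infty(1+u)$ with $\nabla\rho_\infty+\rho_\infty\nabla\psi=0$ to reduce the flux to $v^2\rho_\infty\nabla u$, the energy identity for $\int_\mathcal{O}u^2\rho_\infty\,\mathrm{d}x$, the zero-average check via mass conservation, the weighted Poincar\'e inequality of Lemma~\ref{lem:Poincare}, and Gr\"onwall. No gaps.
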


\begin{proof}
It is clear that, by mass conservation, $u$ satisfies for all $t\geq 0$,
$$
\int_{\mathcal{O}}u(x,t)\,\rho_\infty(x)\, \mathrm{d}x=0 .
$$ 

From Lemma~\ref{lem:stationary} we have $\nabla(\log\rho_\infty+\psi)=0$, i.e.
$$
\nabla\rho_\infty+\rho_\infty\nabla\psi=0.
$$
Setting $\rho=\rho_\infty(1+u)$ in \eqref{Basic} and using the previous identity, we deduce that $u$ solves the problem
\begin{equation}\label{eq:u}
\rho_\infty \partial_t u = D \nabla \cdot \big(v^2\,\rho_\infty\,\nabla u\big)
\quad\text{in }\mathcal{O},
\qquad
n\cdot\big(v^2 \rho_\infty \nabla u\big)=0 \quad\text{on }\partial\mathcal{O},
\end{equation}
with initial condition 
\begin{equation}
\label{eq:u-ic}
u(x,0)=u^0 := \frac{\rho^0(x)}{\rho_\infty(x)}-1.
\end{equation}
Consider the weighted energy of the initial-boundary value problem \eqref{eq:u}-\eqref{eq:u-ic}
$$
\Phi(t):=\frac12\int_{\mathcal{O}} u(x,t)^2 \rho_\infty(x)\, \mathrm{d}x
=\frac12\|u(t)\|_{L^2(\mathcal{O};\rho_\infty(x)\mathrm{d}x)}^2.
$$
Differentiating in time and using \eqref{eq:u}, integration by parts with the Neumann boundary condition gives
$$
\Phi'(t)
= \int_{\mathcal{O}} u\,\rho_\infty\,\partial_t u\, \mathrm{d}x
= D\int_{\mathcal{O}} u\,\nabla\!\cdot\!\big(v^2\,\rho_\infty\,\nabla u\big)\, \mathrm{d}x
= -D\int_{\mathcal{O}} v(x)^2\,\rho_\infty(x)\,|\nabla u|^2\, \mathrm{d}x.
$$
Applying the weighted Poincar\'e inequality \eqref{eq:WP} to $u(t)$, we deduce
$$
\Phi'(t)\leq  -D \frac{v_{\textrm{min}}^2\,m_*}{m^* C_{\mathcal{O}}}
\int_{\mathcal{O}} u^2(x,t)\rho_\infty(x) \mathrm{d}x = -2D \frac{v_{\textrm{min}}^2\,m_*}{m^* C_{\mathcal{O}}} \Phi(t).
$$
Finally, Gr\"onwall's lemma yields \eqref{eq:L2-decay-direct}.
\end{proof}

From the previous lemma we can deduce the following result, which gives an explicit, though non necessarily optimal, estimate of the convergence rate to the stationary state of the system.

\begin{corollary}[Exponential convergence]
\label{cor:L1decay}
Under the assumptions of Proposition \ref{prop:exp-conv}, let $\rho$ be the solution of \eqref{Basic}--\eqref{eq:bc} with initial condition $\rho^0$ and $\rho_\infty$ the stationary solution with the same mass as~$\rho^0$. Then, the following decay estimate of the $L^1$-norm of the difference between $\rho$ and the stationary solution $\rho_\infty$ holds:
\begin{equation}
\label{eq:L1-decay}
\|\rho(t)-\rho_\infty\|_{L^1(\mathcal{O})} \leq  M^{1/2}
 \left\Vert
\frac{\rho^0(x)}{\rho_\infty(x)}-1  \right\Vert_{L^2(\mathcal{O};\rho_\infty(x)\mathrm{d}x)}
\exp\left(
- D \frac{v_{\textrm{min}}^2(\alpha v_{\min}^2+v_{\min})^2} {(\alpha v_{\max}^2+v_{\max})^2 C_{\mathcal{O}}}
\,t\right)
\end{equation}
for all $t\geq 0$.
\end{corollary}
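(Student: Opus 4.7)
The plan is to derive the $L^1$ bound from the weighted $L^2$ decay already proved in Proposition~\ref{prop:exp-conv} via a one-line Cauchy--Schwarz argument. First I write
\[
\rho(x,t)-\rho_\infty(x) = u(x,t)\,\rho_\infty(x),
\]
where $u=\rho/\rho_\infty-1$ is exactly the quantity whose weighted $L^2$ decay is controlled by \eqref{eq:L2-decay-direct}. This is the natural way to pass from a pointwise relative error (weighted by $\rho_\infty$) to an absolute error measured in $L^1$.

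Next I apply the Cauchy--Schwarz inequality with respect to the probability-like measure $\rho_\infty\,\mathrm{d}x$: splitting $\rho_\infty = \sqrt{\rho_\infty}\cdot\sqrt{\rho_\infty}$ gives
\[
\|\rho(t)-\rho_\infty\|_{L^1(\mathcal{O})}
= \int_{\mathcal{O}} |u(x,t)|\,\rho_\infty(x)\,\mathrm{d}x
\leq \left(\int_{\mathcal{O}} u(x,t)^2 \rho_\infty(x)\,\mathrm{d}x\right)^{1/2} \left(\int_{\mathcal{O}}\rho_\infty(x)\,\mathrm{d}x\right)^{1/2}.
\]
Since $\rho_\infty$ has total mass $M$ by Lemma~\ref{lem:stationary}, the second factor is exactly $M^{1/2}$, giving
\[
\|\rho(t)-\rho_\infty\|_{L^1(\mathcal{O})} \leq M^{1/2}\,\|u(t)\|_{L^2(\mathcal{O};\rho_\infty(x)\mathrm{d}x)}.
\]

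Finally I invoke Proposition~\ref{prop:exp-conv}: taking the square root of the inequality \eqref{eq:L2-decay-direct} turns the rate $2D\,\tfrac{v_{\min}^2(\alpha v_{\min}^2+v_{\min})^2}{(\alpha v_{\max}^2+v_{\max})^2 C_{\mathcal{O}}}$ inside the exponential into half that rate, which matches precisely the rate appearing in \eqref{eq:L1-decay}. This substitution yields the claimed estimate and closes the proof.

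There is no real obstacle here: the corollary is purely a consequence of Cauchy--Schwarz applied to the measure $\rho_\infty\,\mathrm{d}x$ together with the mass conservation identity $\int_{\mathcal{O}}\rho_\infty\,\mathrm{d}x=M$. The only subtlety worth flagging is the factor $1/2$ in the exponential rate, which comes from taking a square root of a squared $L^2$ decay, and the fact that one cannot do better than $M^{1/2}$ in the prefactor without further regularity on $\rho^0$ beyond the weighted $L^2$ control.
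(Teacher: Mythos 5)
Your proposal is correct and follows essentially the same route as the paper: rewrite $\rho-\rho_\infty$ as $u\,\rho_\infty$, apply Cauchy--Schwarz with respect to the measure $\rho_\infty\,\mathrm{d}x$ to get the prefactor $M^{1/2}$, and take the square root of the weighted $L^2$ decay \eqref{eq:L2-decay-direct} to halve the exponential rate. Nothing is missing.
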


\begin{proof}
We apply the Cauchy--Schwarz inequality to the $L^2$-norm of $u$ with respect to the measure $\rho_\infty  \mathrm{d}x$:
$$
\|\rho(t)-\rho_\infty\|_{L^1(\mathcal{O})} = \|u(t)\|_{L^1(\mathcal{O};\rho_\infty(x)\mathrm{d}x)}
\leq \left(\int_{\mathcal{O}}u(x,t)^2 \rho_\infty(x) \mathrm{d}x\right)^{1/2}
\left(\int_{\mathcal{O}}\rho_\infty(x) \mathrm{d}x\right)^{1/2}
$$
$$
= M^{1/2}\,\|u(t)\|_{L^2(\mathcal{O};\rho_\infty(x)\mathrm{d}x)}.
$$
Combining the last inequality with \eqref{eq:L2-decay-direct}, we obtain \eqref{eq:L1-decay}.
\end{proof}

\begin{remark}
Unlike Proposition \ref{prop:entropy}, which allows for time-dependent $v$, the study of the convergence to equilibrium in a bounded domain $\mathcal{O}$ requires $v$ (or, equivalently, $S$) to be independent of $t$. This assumption preserves the generality of the result: allowing $v$ (or $S$) to vary in time would impose us to specify the evolution law of $v$, which is not needed in Proposition \ref{prop:entropy} (where only condition \eqref{entrpieCond} is required).
\end{remark}

\section{Pattern formation}
\label{sec:patterns}

When the mechanical properties of the substrate are modified by the cells themselves, we can couple the macroscopic equation \eqref{extendedKS} with a production rule for the signal
\begin{equation} \label{convolS}
S=K*\rho ,
\end{equation}
with a convolution kernel $K(x)\geq 0$ of mass $1$ so that $$\int_{\mathbb{R}^d} S 
\,\mathrm{d} x= \int_{\mathbb{R}^d} \rho 
\,\mathrm{d}x.$$

To study the stability of the constant steady state and the pattern formation ability of Eq.~(\ref{Basic}), we work on the full space throughout this section.

\subsection{Linear instability}

We linearize Eq.~(\ref{Basic}) around the constant state $(\rho,S)=(1,1)$.
We write $\rho=1+r(t,x)$ and $S=1+s(t,x)$ and put them into Eq.~(\ref{Basic}), then we have
\begin{equation*}
\begin{split}
\partial_t r-D\dv\Big[&
\big( v(1)^2+2v(1)v'(1)s+o(s)\big) \Big\{\nabla r
\\
& 
+(1+r) \nabla\ln \big[ \al v(1)^2+v(1)+(2\al v(1)v'(1) +v'(1))s+o(s) \big]\Big\}
\Big]=0,
\end{split}    
\end{equation*}  
which can also be reformulated as
\begin{equation*}    
    \partial_t r-D\dv\left[
    \big(v(1)^2+2v(1)v'(1)s+o(s) \big)\Big \{
    \nabla r+(1+r)\nabla
    \ln\left(1+\frac{2\al v(1)v'(1) +v'(1)}{\al v(1)^2+v(1)}s+o(s)
    \right)\Big\}
    \right]=0.
\end{equation*}
Thus, the linearized equation of Eq.~(\ref{Basic}) is written as
\begin{equation}\label{eq_linearized}
    \partial_t r-Dv(1)^2 \Delta \left(
    r+\frac{v'(1)(2\al v(1)+1)}{v(1)(\al v(1)+1)} s
    \right)=0.
\end{equation}

By taking the Fourier transform of the above equation and Eq.~\eqref{convolS} for $S$,
we obtain
\begin{equation} \label{eqlin}
    \partial_t \widetilde r(k)=-Dv(1)^2 |k|^2 \big[1+\gamma_\alpha  \widetilde K(k)\big] \widetilde r(k),
\end{equation}
where the parameter $\gamma_\alpha$ is defined as 
\begin{equation}\label{eq_gamma}
\gamma_\alpha=\frac{v'(1)(2\al v(1)+1)}{v(1)(\al v(1)+1)}.
\end{equation}
As a conclusion, when $1+\gamma_\alpha  \widetilde K(k) >0$ for all $k$ we have linear stability, and if there are values of $k$ with $1+\gamma_\alpha  \widetilde K(k)<0$ we expect pattern formation.
We also note that $|\gamma_\alpha|$ increases with the parameter $\alpha$ and $|\gamma_\infty|=2|\gamma_0|$. 
Thus, when $v'(1)<0$, pattern formation more likely occurs as $\alpha$ increases.
\\

For example, when we consider a diffusion equation for $S$, 
\begin{equation}\label{eqS}
-D_S\Delta S + S=\rho,
\end{equation}
then $$\widetilde K(k)=\frac{1}{1+D_S |k|^2}$$ and Eq. \eqref{eqlin} becomes 
\[
    \partial_t \widetilde r(k)=-Dv(1)^2 |k|^2 \frac{1+\gamma_\alpha +D_S |k|^2}{1+D_S |k|^2}\widetilde r(k).
\]
This indicates that when $\gamma_\alpha <-1$, the oscillations with low wave numbers,
\begin{equation}\label{eqInsta}
0<|k|<\sqrt{-\frac{1}{D_S}\left(\gamma_\alpha+ 1\right)},
\end{equation}
become linearly unstable. Thus, we can expect pattern formation in accordance with a type of Turing instability mechanism.

\subsection{Steady periodic solutions in 1D}
\label{sec:periodic}

In one dimension, we can prove the existence of specific patterns, namely  periodic solutions. So we consider Eq.~\eqref{Basic}  coupled with Eq.~(\ref{eqS}) set in the full line.
To construct the spatially periodic steady state solutions, we argue as follows.

We denote the spatial derivative by $S_x$. We choose the reference value $x=0$ such that $S_{x}(0)=0$. Indeed, we may always choose a local minimum of $S$ at $x=0$, leading to the conditions
\begin{equation}\label{Cond0}
S_{x}(0)=0, \qquad D_S S_{xx}(0)= S_0 - \rho_0>0, 
\end{equation}
being given $\rho_0:=\rho(0)$, $S_0:=S(0)$. 
We also assume $S$ is symmetric at $x=0$, i.e., $S_{xxx}(0)=0$, which gives $\rho_x(0)=0$.
Next, we compute the steady state solutions of 
Eq.~\eqref{Basic} written as 
\begin{equation} \label{Basic2}
\partial_t \rho -D\partial_x \Big\{  \rho v(S)^2 \partial_x \ln \big[\rho \big( \alpha v(S)^2+ v(S) \big) \big]   \Big\}=0 .
\end{equation}
Since $v(S)>0$, according to \eqref{def:v}, they are given by
\[
\partial_x \ln[\rho(\al v(S)^2+v(S))]=0.
\]
Thus, we can obtain an explicit relation between  $\rho(x)$ and $S(x)$
\begin{equation}\label{eq_C0}
\rho=\frac{C_0}{\al v(S)^2+v(S)}, \qquad \qquad C_0=\rho_0 \big(\alpha v(S_0)^2+v(S_0)\big).
\end{equation}

\blue{\begin{proposition} [Existence of periodic solutions] Let $S$ be given by \eqref{eqS} and define $C_0$ by \eqref{eq_C0}. Assume $S\mapsto v(S)$ is continuously differentiable and satisfies the decay condition at infinity
\begin{equation}\label{as_per_v}
Sv(S)\le C_1<C_0\qquad \text{at} \quad  S\approx+\infty \qquad  \text{(and thus $v(S)\approx 0$)},
\end{equation}
then Eq.~\eqref{Basic}  coupled with Eq.~(\ref{eqS}) has at least  one (non-constant) periodic solution. It is unique (up to translation) when $Sv(S)$ decays for all $S$ \red{large enough} (see \eqref{as:per_v2}).
\end{proposition}
}

\begin{proof}
We can reduce the equation for $S$ as follows. Multiplying Eq.~\eqref{eqS} by $S_x$, we obtain 
\begin{equation*}
\begin{split}
    &-D_S S_x S_{xx}+S_xS=-\frac{D_S}{2}((S_x)^2)_x
    +\frac 12(S^2)_x= \rho S_x =\frac{C_0 S_x}{\alpha v(S)^2+v(S)}.
    \end{split}
\end{equation*}
Integrating the above equation with respect to $x$, we obtain
\begin{equation}\label{eq_DsSx}
\begin{split}
    D_S (S_x)^2&=S(x)^2-S_0^2-\int_0^x\frac{2C_0 S_x
(\xi)\mathrm{d}\xi}{\alpha v(S(\xi))^2+v(S(\xi))}\\
    &=f(S(x)),
\end{split}
\end{equation}
where
\begin{equation}\label{eq_fS}
f(S)=S^2-S_0^2-\int_{S_0}^S\frac{2C_0\mathrm{d}s}{\alpha v(s)^2+v(s)}.
\end{equation}
Notice that 
\begin{equation*}
f'(S)=2S-\frac{2C_0}{v(S)(\alpha v(S)+1)}
=\frac{2}{v(S)} \left[ Sv(S)-\frac{C_0}{\alpha v(S)+1} \right],
\end{equation*}
and thus $f(S)$ is positive near $S_0$, $S>S_0$ because, according to \eqref{Cond0} and \eqref{eq_C0},
$$
f(S_0)=0, \qquad f'(S_0)=2(S_0-\rho_0)>0.
$$
Also, to have a non monotone solution for $x>0$, $S_x$ should vanish and thus there should exist $S_L$ satisfying
\[
S_L>S_0, \qquad f(S_L)=0, \qquad f(S) >0 \text { for } S_0<S<S_L .
\]
If such an $S_L$ exists, there exists a one-dimensional periodic solution of half-period $L$ and mass $M=\int_0^L\rho(x)
\mathrm{d}x$. These parameters $L$ and $M$ are determined by
\begin{equation}\label{eqL}
L=\sqrt{D_S}\int_{S_0}^{S_L}\frac{\mathrm{d}S}{\sqrt{f(S)}},
\qquad \quad M=\sqrt{D_S}
\int_{S_0}^{S_L}\frac{S\mathrm{d}S}{\sqrt{f(S)}} ,
\end{equation}
which are obtained by the following direct computations:
\begin{equation}\label{eqLM}
L=\int_0^L \mathrm{d} x=\int_{S_0}^{S_L}\frac{\mathrm{d}S}{S_x}=\sqrt{D_S}\int_{S_0}^{S_L}\frac{\mathrm{d}S}{\sqrt{f(S)}}, \qquad 
M=\int_0^LS \mathrm{d} x
=\sqrt{D_S}\int_{S_0}^{S_L}\frac{S \mathrm{d}S}{\sqrt{f(S)}}.
\end{equation}
Another question is to find such a solution for given $L$ and $M$, keeping in mind that $\rho_0$ and $S_0$ are our choices. It is difficult to solve this inverse problem. 

\blue{We can prove the existence of $S_L$ as follows. Under the assumption~\eqref{as_per_v}, $f'(S)\ll -1$ at $S\approx +\infty$ and thus, there exists $S_*>S_0$ and $S_L>S_*$ such that
$$
f'(S_*)=0,\qquad f'(S)>0\quad \mathrm{for} \quad S<S_*,\quad \text{and} \quad  f(S_L)=0.
$$
Furthermore, under the additional assumption 
\begin{equation} \label{as:per_v2}
(Sv(S))'= v(S)+Sv'(S)\leq 0 \qquad  \text{for} \quad  S>S_*, 
\end{equation}
we can estimate for $S>S_*$
}
$$
f''(S)=2+\frac{2C_0v'(S)(2\al v(S)+1)}{v(S)^2(\al v(S)+1)^2} < 2-\frac{2C_0}{Sv(S)(\al v(S)+1)}<0.
$$
Thus $f$ is concave and the positive root $S_L$ is unique.
\\

Since $f(S_0)=f(S_L)=0$ and $f(S)>0$ for $S_0<S<S_L$, the spatial derivative of $S(x)$  is given, from Eq.~(\ref{eq_DsSx}), as
\begin{equation}\label{eq_Sx+}
S_x(x)=+\sqrt{\frac{f(S(x))}{D_S}}, \quad \text{for}  \quad 0\le x\le L, \qquad S(0)=S_0, 
\end{equation}
where $L$ is defined to satisfy $S(L)=S_L$ (or \red{equivalently}, $S_x(L)=0$) according to Eq.~\eqref{eqLM}.
Thus, on the one hand,  the monotone profile of $S(x)$ for $0\le x \le L$ is obtained by Eq.~(\ref{eq_Sx+}).
On the other hand, considering the negative sign of Eq.~(\ref{eq_Sx+}), we can construct the symmetric solution $S(x)$ for $-L\le x\le 0$ as $S(x)=S(-x)$. The construction can be continued and there are periodic solutions of period $2L$, $4L$, \dots
\end{proof}

To conclude, we present the formulas, which are used for our numerical scheme in Eq.~(\ref{eqxj}):
\[
L=\sqrt{D_S}\int_{S_0}^{S_L}\frac{\mathrm{d}S}{\sqrt{f(S)}}
=2\sqrt{D_S}\int_{S_0}^{S_L}\frac{\big(\sqrt{f(S)} \big)'}{f'(S)} \mathrm{d}S=2\sqrt{D_S}\int_{S_0}^{S_L}\sqrt{f(S)}\frac{f''(S)}{(f'(S))^2}\mathrm{d}S.
\]

\subsection{Dirac concentration in periodic patterns as $D_S \to 0$}
\label{sec:concentration}

\blue{As depicted in the numerical examples on Fig.~\ref{fig:rho_Ds} and~\ref{fig:S_Ds}, the steady state solution can exhibit very localized patterns. To explain this, we now work in an interval $[0,1]$ and analyze if concentration can occur, say at $x=\frac 12$, for the solution built in Section~\ref{sec:periodic}. 
}

\blue{
Because $S$ satisfies an elliptic equation, a singular pattern can only occur as $D_S \to 0$. 
In this circumstance, it follows from Eq.~\eqref{eqS} that $S\approx \rho$ and thus the  Dirac concentration occurs both for $\rho$ and~$S$.
Consequently, far from the concentration point, i.e., as $x=0$,  we have to choose $\rho_0,\; S_0 \approx 0$ and thus $C_0\approx 0$ because of the definition~\eqref{eq_C0}. 
Injecting $C_0\approx 0$ in~\eqref{eq_fS}, the root $S_L$, if it exists, is tending to $+\infty$ since the  derivative of $f$ is positive for moderate $S$ and the second root $S_L$ exists  under the assumption 
}
\begin{equation} \label{condDirac}
 \lim_{S\to \infty} S- \frac{ C_0}{v(S)(\al v(S)+1)} < 0.
\end{equation}

\blue{We may connect this condition to the numerical illustrations below.
}
In the example \eqref{eqvS},  this means 
\[
\lim_{S\to \infty} S-  C_0 (1+qS^p) <0
\]
and concentration may happen for $p>1$. 

However for the choice \eqref{eq_vS_atan}, $v(S)$ has a positive limit as $S\to \infty$. Consequently, we find that the concentration condition~\eqref{condDirac} cannot hold. Numerically, \red{small-amplitude sinusoidal oscillations, which are consistent with the linearized equation (\ref{eq_linearized}), and phase-separation-like profiles} are observed, see Figs.~\ref{fig:tevol_rho_ent_atan} and \ref{fig:tevol_rho_ent_atan_2}.
In general, these periodic patterns are expected when the linear instability condition~\eqref{eqInsta} is fulfilled and the concentration condition~\eqref{condDirac} does not hold.

\section{Numerical illustration}
\label{sec:numerics}
We consider Eq.~\eqref{Basic} coupled with Eq.~\eqref{eqS} in the space $x\in[-L/2,L/2]$ with the periodic boundary condition.
We rewrite Eq.~(\ref{Basic2}) as
\begin{equation*}
    \partial_t \rho -D\partial_x(\rho \phi[\rho])=0,
\end{equation*}
with
\begin{equation*}
    \phi[\rho]= v(S)^2\partial_x \ln[\rho(\al v(S)^2+v(S))].
\end{equation*}
By utilizing this formula, a finite volume scheme of Eq.~(\ref{Basic}) in the one-dimensional periodic case is obtained as
\begin{subequations}\label{1dfv}
\begin{align}
    &\rho_i^{n+1}=\rho_i^n+\frac{D \varDelta t}{\varDelta x}
    (F_i-F_{i-1}),\\
    &F_i=(\phi_i^n)^+\rho_{i-1}-(-\phi_i^n)^+\rho_i,\\
    &\phi_i^n=\frac{1}{\varDelta x}v\left(\frac{S_{i-1}+S_i}{2}\right)^2\ln\left[\frac{\rho_i(\al v(S_i)^2+v(S_i))}{\rho_{i-1}(\al v(S_{i-1})^2+v(S_{i-1}))}\right],
\end{align}
\end{subequations}
where $$\rho^n_i=\frac1{\varDelta x}\int_{x_i}^{x_{i+1}}\rho(x,t^n)
\,\mathrm{d}x,\quad S^n_i=\frac1{\varDelta x}\int_{x_i}^{x_{i+1}}S(x,t^n)
\,\mathrm{d}x,$$ and $(a)^+=\max(a,0)$.
Here, the one-dimensional space $x\in[-L/2,L/2]$ is discretized as $x_i=-\frac{L}{2}+i\varDelta x$ ($i=0,\cdots,I$) with $\varDelta x=L/I$.
The periodic boundary condition is given by setting $\rho_{-1}=\rho_{I-1}$, $\rho_I=\rho_0$, $S_{-1}=S_{I-1}$, and $S_I=S_0$ in Eq.~(\ref{1dfv}).
In the following numerical simulations, we set $\varDelta x=\sqrt{D_S}/10$ and $\varDelta t=(\varDelta x)^2/5$, and fix the parameters $D=1$, $L=1$.

For the equilibrium velocity $v(S)$, we first consider
\begin{equation}\label{eqvS}
v(S)=\frac{1}{1+q S^p},
\end{equation}
with $p>1$ and $q>0$,
an expression satisfying the condition \eqref{entrpieCond} leading to entropy control.

Figures~\ref{fig:rho_al}--\ref{fig:rho_Ds} show the time evolution of the density $\rho$, starting from the same initial distribution indicated by the dashed line in each figure.
Here, we fix $p=q=2$ in Eq.~(\ref{eqvS}).

Figure~\ref{fig:rho_al} shows the effect of the parameter $\alpha=m  \lambda$, which is introduced in Sec.~\ref{sec:alpha} to connect two different limiting cases, i.e., the strong friction limit ($\alpha=0$) and the fast tumbling limit ($\alpha=\infty$).
In each figure, $k_\mathrm{c}$ denotes the critical wave number for the linear instability Eq.~(\ref{eqInsta}), i.e.,
\[
k_\mathrm{c}=\sqrt{-\frac{\gamma_\alpha +1}{D_S}}.
\]
Only oscillations with the lower wave number $k<k_\mathrm{c}$ (i.e., the longer wave lengths $l>2\pi/k_\mathrm{c}$) are linearly unstable.
Since the periodic length $L=1$ is fixed, oscillations appear only when $2\pi/k_\mathrm{c}\le 1$.

The numerical results in Fig.~\ref{fig:rho_al} illustrate that the instability condition (\ref{eqInsta}) is sharp.
It is observed that when tumbling is dominant ($\alpha=100$), aggregation is enhanced.
On the other hand, when friction is dominant ($\alpha=0$), aggregation is weakened.
This result is consistent with the linear stability analysis.
In the analysis, the parameter $\gamma_\alpha$, Eq.~(\ref{eq_gamma}), is a decreasing function of $\alpha$, so that oscillations more likely occur for large values of $\alpha$.

Figure~\ref{fig:rho_Ds} shows the numerical results obtained at different values of $D_S$.
Here, the parameter $\alpha=1$ is fixed.
Interestingly, the transient behaviors of oscillation modes are observed.
For $D_S=0.01$ [in (a)], the critical wave length $2\pi/k_\mathrm{c}=0.770$ is larger than $L/2$. 
Thus, an oscillation mode with multiple peaks is not allowed.
On the other hand, for $D_S=0.0025$ [in (b)] and $D_S=0.000625$ [in (c)], the critical wave lengths are shorter than half of the interval $L$.
Thus, oscillation modes with multiple peaks may appear due to the linear instability.
Indeed, at the initial stage, oscillation modes with multiple peaks are observed for $D_S=0.0025$ and $D_S=0.000625$.
However, those oscillation modes are merged into a unimodal aggregation at the steady state.
This result illustrates the transition to the Dirac concentration as $D_S\rightarrow 0$, which is described in Sec.~\ref{sec:concentration}.

Figure \ref{fig:S_Ds} compares the steady state solutions of $S$ obtained by the finite volume scheme (\ref{1dfv}) and by Eq.~(\ref{eq_DsSx}).
Here, Eq~(\ref{eq_DsSx}) is numerically solved as following:
When $\rho_0$ and $S_0$ are given,
$f(S)$ for $v(S)$ defined by Eq.~(\ref{eqvS}) with $p=2$ is calculated as
\[
\begin{split}
f(S)=S^2-S_0^2-2C_0&\left[(1-\alpha)(S-S_0)
+\frac{q}{3}(S^3-S_0^3) \right.\\
&\left.
+\frac{\alpha^2}{q}
\sqrt{\frac{q}{1+\alpha}}\left(
\arctan\left(\sqrt{\frac{q}{1+\alpha}}S\right)
-\arctan\left(\sqrt{\frac{q}{1+\alpha}}S_0\right)
\right)
\right].
\end{split}
\]
We uniformly descritize $S$ as $S_j=S_0+j\varDelta S$ ($j\ge 0$) and calculate the position $x_j$, at which $S(x_j)=S_j$ holds, by
\[
x_j=\sqrt{D_S}\int_{S_0}^{S_j}\frac{dS}{\sqrt{f(S)}}.
\]
We can approximate the above integration by the trapezoidal rule.
However, to avoid the numerical singularity due to $f(S_0)=0$, we use the following formulas at $j=1$,
\[
\int_{S_0}^{S_1}\frac{\mathrm{d}S}{\sqrt{f(S)}}=
\frac{2\sqrt{f(S_1)}}{f'(S_1)}+\int_{S_0}^{S_1}\frac{2f''(S)\sqrt{f(S)}}{f'(S)^2}\mathrm{d}S.
\]
Then, we can calculate $x_j$ ($j=0,\cdots,J)$ as
\begin{subequations}\label{eqxj}
\begin{align}
&x_0=0,\\
&x_1=\sqrt{D_S}\left(\frac{2\sqrt{f(S_1)}}{f'(S_1)}+\frac{f''(S_1)\sqrt{f(S_1)}}{f'(S_1)^2}\varDelta S\right),\\
&x_j=x_{j-1}+\sqrt{D_S}\frac{\varDelta S}{2}
\left(
\frac{1}{\sqrt{f(S_j)}}
+\frac{1}{\sqrt{f(S_{j-1})}}
\right),\quad(2\le j\le J).
\end{align}
\end{subequations}
Here, $J$ is determined by the condition $f(S_J)\ge 0$ and $f(S_{J+1})< 0$.

The semi-analytical results in Fig.~\ref{fig:S_Ds} are calculated using $\rho_0$ and $S_0$ of the finite volume results and then shifted along the $x$-axis to align their $S$ peak with that of the finite volume results.
It is clearly seen that the steady state solutions obtained by the finite volume scheme agree well with the semi-analytical ones.

\begin{figure}[htbp]
    \centering
    \includegraphics[width=1\linewidth]{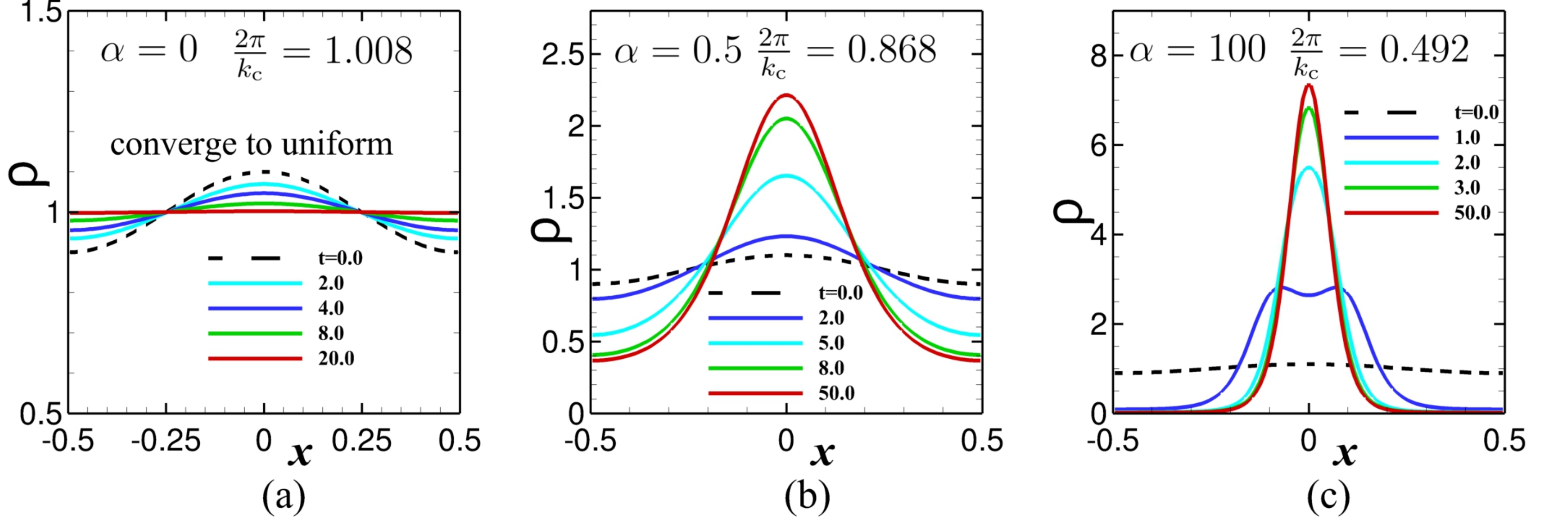}
    \caption{Time evolutions of $\rho$ at different values of $\alpha$. The parameter $D_S=0.01$ is fixed.}
    \label{fig:rho_al}
\end{figure}

\begin{figure}[htbp]
    \centering
    \includegraphics[width=1\linewidth]{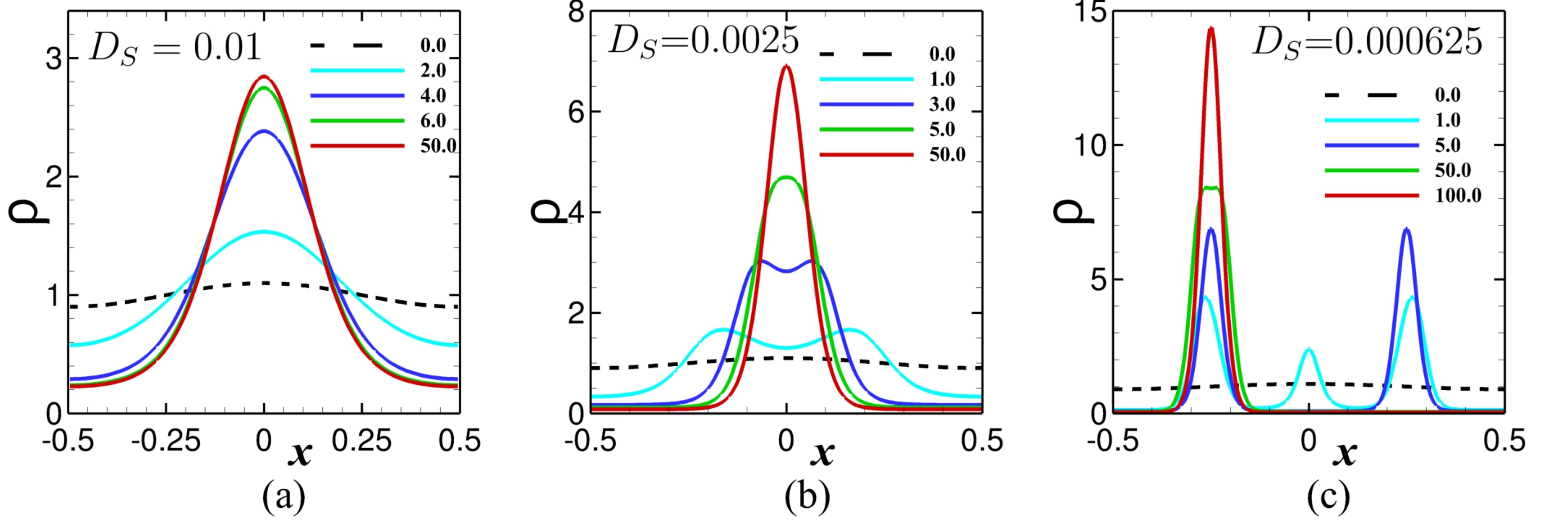}
    \caption{Time evolutions of $\rho$ at different values of $D_S$.
    The parameter $\alpha=1$ is fixed.
    The critial wave lengths are $2\pi/k_\mathrm{c}=0.770$ for (a),
    $2\pi/k_\mathrm{c}=0.385$ for (b), and
    $2\pi/k_\mathrm{c}=0.192$ for (c)
    }
    \label{fig:rho_Ds}
\end{figure}

\begin{figure}[htbp]
    \centering
    \includegraphics[width=1\linewidth]{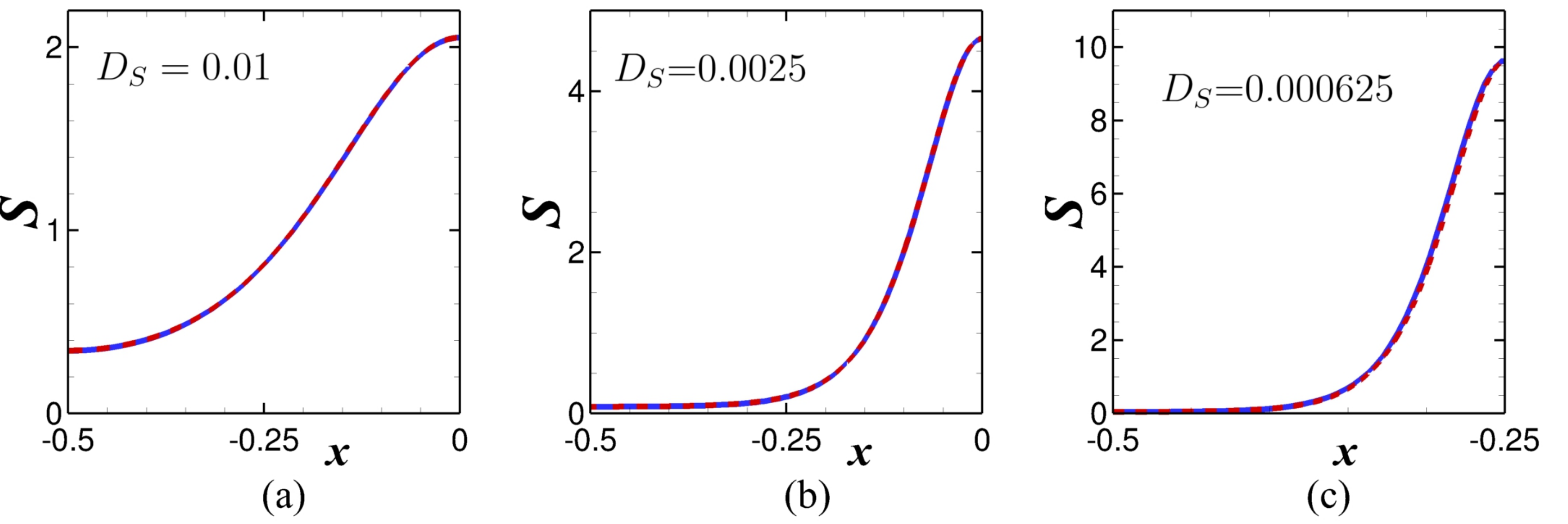} 
      \caption{Comparison of the steady solutions of $S$ obtained by the finite volume scheme (\ref{1dfv}) [blue solid lines] and those obtained by the semi-analytical method (\ref{eq_DsSx}) [red dashed lines].
    The parameters are the same as in Fig.~\ref{fig:rho_Ds}.}
    \label{fig:S_Ds}
\end{figure}

Next, we consider a sigmoidal equilibrium velocity, i.e.,
\begin{equation}\label{eq_vS_atan}
    v(S)=1-\chi \tan^{-1} \left(\frac{S-1}{\delta}\right),
\end{equation}
where $\chi>0$ is the modulation amplitude and $\delta>0$ is the stiffness.
This $v(S)$ has a positive limit as $S\rightarrow\infty$, so that we can expect \red{different types of periodic patterns} instead of a unimodal aggregation, even at small values of $D_S$ (see Sec.~\ref {sec:concentration}).

Figure \ref{fig:tevol_rho_ent_atan} shows the time evolution of density $\rho$ and entropy $E=\frac{1}{L}\int \rho\ln\rho \,\mathrm{d}x$ for $v(S)$ given by Eq.~(\ref{eq_vS_atan}) with $\chi=0.02$ and $\delta=0.01$.
Here, the simulation starts from a small periodic perturbation whose wavelength is close to the critical wavelength $2\pi/k_\mathrm{c}$.
Interestingly, multiple transitions of meta-stable states are observed.
The initial meta-stable state is a sinusoidal-like periodic pattern with a wavelength of $L/3$.
Then, it transits to a rectangular-like periodic pattern with a wavelength of $L/2$, and finally, a phase-separation-like profile is obtained.
The mode transitions in Figure (a) occur at the sharp decreases of the entropy in Figure (b).
The nonlinear analysis of this mode transition remains open.

Figure \ref{fig:tevol_rho_ent_atan_2} is the results for Eq.~(\ref{eq_vS_atan}) with $\chi=0.012$ and $\delta=0.01$.
The oscillation amplitude and entropy only grow in an early state, say $t\lesssim 1$, but they remain unchanged over the long time period $1<t<100$.
The mode transition observed in Fig.~\ref{fig:tevol_rho_ent_atan} does not occur for the long time period.
These numerical results indicate that there exist three distinct types of patterns, i.e, periodic oscillations, phase separations, and single Dirac concentrations.

\begin{figure}[htbp]
    \centering
    \includegraphics[width=0.8\linewidth]{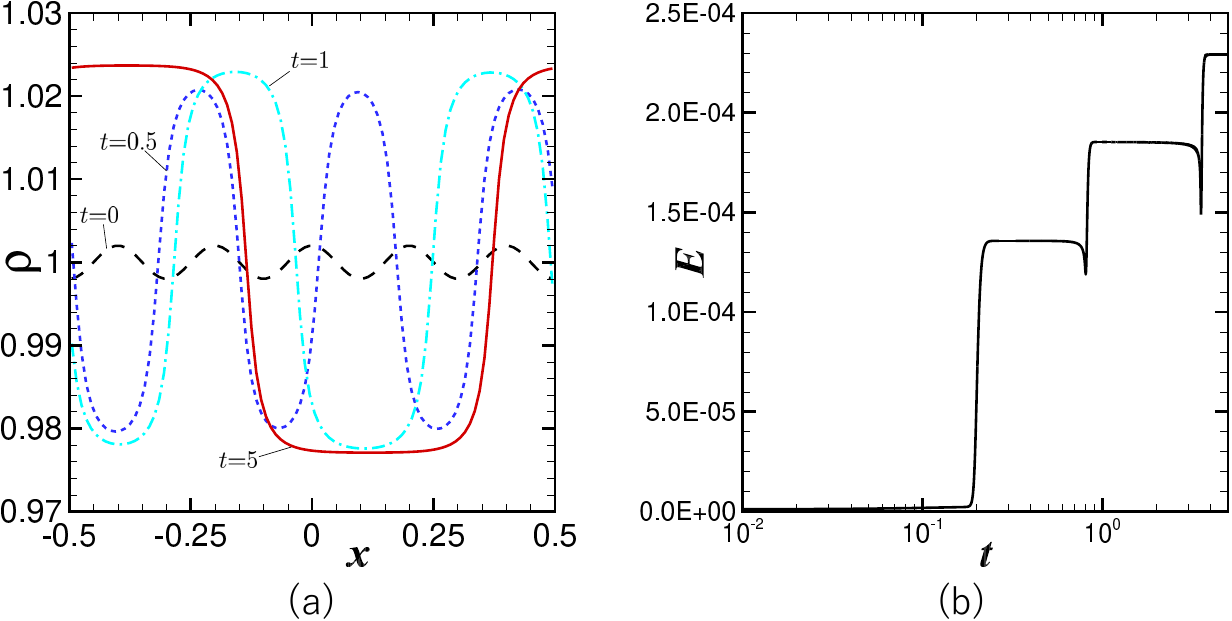}
    \caption{Time evolutions of $\rho$ [in (a)] and $E=\frac{1}{L}\int \rho\ln\rho \,\mathrm{d}x$ [in (b)] for $v(S)$ written as Eq.~(\ref{eq_vS_atan}). The parameters are set as $\chi$=0.02, $\delta$=0.01, $D_S$=0.001, and $\alpha=0$, which gives $2\pi/k_{\mathrm{c}}=0.20$. }
    \label{fig:tevol_rho_ent_atan}
\end{figure}

\begin{figure}[htbp]
    \centering
    \includegraphics[width=0.8\linewidth]{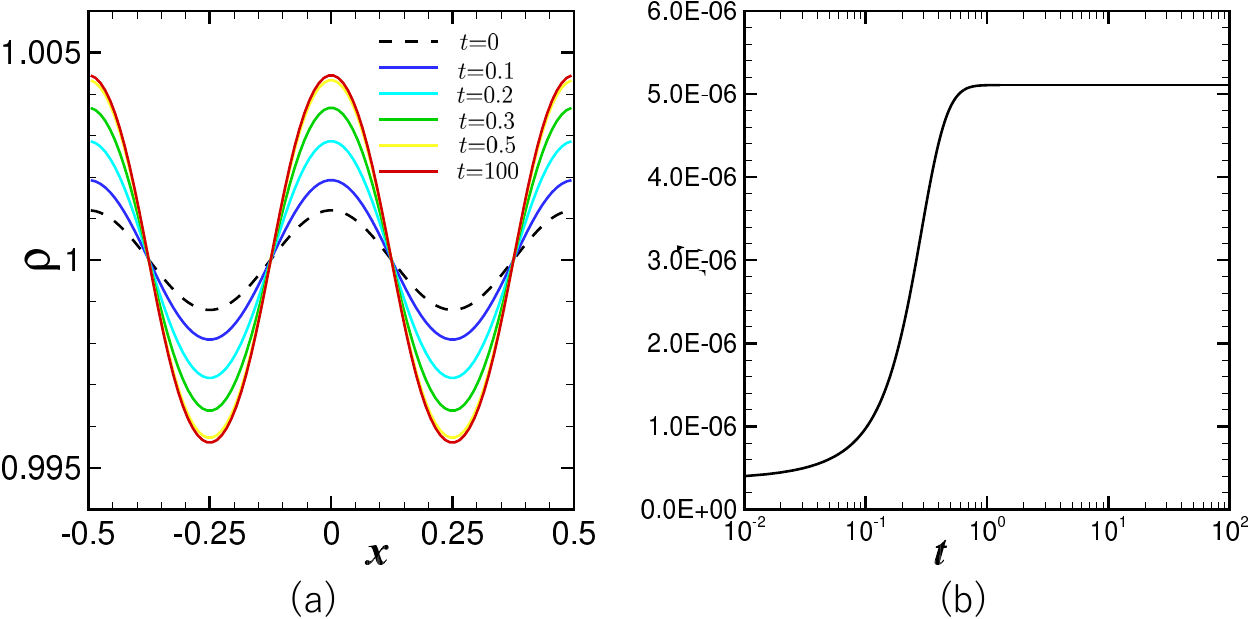}
    \caption{Time evolutions of $\rho$ [in (a)] and $E=\frac{1}{L}\int \rho\ln\rho \,\mathrm{d}x$ [in (b)] for $v(S)$ written as Eq.~(\ref{eq_vS_atan}). The parameters are set as $\chi$=0.012, $\delta$=0.01, $D_S$=0.001, and $\alpha=0$, which gives $2\pi/k_{\mathrm{c}}=0.44$. }
    \label{fig:tevol_rho_ent_atan_2}
\end{figure}

\section{Conclusion}

The new kinetic model \eqref{eq:K1} takes into account the interaction of cells with the substrate through some mechanical forces (propulsion and friction). \blue{In this way, it provides a mesoscopic framework for mechanotaxis, extending standard run-and-tumble models by explicitly incorporating the effect of the mechanical environment on cell migration and collective behavior.}

The relative values of tumbling rate and cell mass determine a family of macroscopic equations. \blue{More precisely, starting from the kinetic model, we formally derived several macroscopic limit equations corresponding to different asymptotic regimes.} These are Fokker--Planck equations with similarity to the Keller--Segel system on the one hand, and with density-suppressed diffusion on the other hand. \blue{One of the main contributions of the paper is therefore to clarify how these different macroscopic behaviors emerge from a single mechanotactic kinetic description.}
\red{Thus, the macroscopic transport coefficients have a clear physical interpretation, since they are derived from the mesoscopic equation without imposing ad-hoc macroscopic assumptions.}

We derived the linear stability condition which combines a parameter value (expressing the relation between tumbling and friction) and signal dependent equilibrium velocity. This condition is very sharp as confirmed by numerical simulations.
\blue{For some of the macroscopic settings, we also obtained more detailed analytical results, including structural properties related to entropy dissipation and free-energy decay.} 

In unstable situations, we found three types of typical patterns: periodic oscillations, phase separations, and single Dirac concentrations (small signal diffusion). \blue{At the theoretical level, these behaviors are predicted by the linear and semi-analytical analysis of the macroscopic equations.} 
\blue{The numerical illustrations are restricted to one space dimension, where they validate the sharpness of the instability threshold and illustrate the possible pattern-selection mechanisms. They should however be understood as illustrative examples rather than as a complete characterization of the nonlinear multidimensional dynamics.}

Numerics shows that several metastable states may occur and understanding the transitions and the global dynamics remains open. From the theoretical side, a nonlinear analysis for the coupled system remains to be done. \blue{A fully rigorous justification of the macroscopic limits from the kinetic model is also left for future work.} In particular we expect blow-up phenomena, at least in dimension $d\geq 2$, as in the limiting cases of the pure Keller--Segel system and, in some specific situations, for the density-suppressed systems. The model itself could be expanded to treat different physical situations as deformation of the substrate and more elaborated hydrodynamic effects.
\blue{Another possible extension of the model could consist in investigating the tumbling frequency modification induced by the presence of supplementary internal variables, according, e.g., to  \cite{ErbanOth2,PTV2016,PSTY2020}.}

\section*{Acknowledgements}
SY acknowledges financial support from JSPS KAKENHI Grant Number JP25K07246.

Part of this paper has been written when FS visited the University of Hyogo under the JSPS invitational fellowship
scheme. JSPS, Kyoto University and the University of Hyogo are deeply acknowledged for the hospitality and for the financial support of the visit.
FS has been moreover partially supported by INdAM, GNFM group.

\appendix

%
%

\bibliographystyle{plain}
\bibliography{biblio}


\end{document}